\documentclass[a4paper,draft]{amsproc}
\usepackage{amssymb}
\usepackage{amscd}
\theoremstyle{plain}
\newtheorem*{theoA}{Theorem A}
\newtheorem*{theoB}{Theorem B}
\newtheorem*{theoC}{Theorem C}
\newtheorem*{theoD}{Theorem D}

 \newtheorem{theo}{Theorem}[section]

 \newtheorem{lem}{Lemma}[section]
 \newtheorem{cor}{Corollary}[section]
\theoremstyle{definition}
 \newtheorem{exm}{Example}[section]
 \newtheorem{ques}{Question}[section]
 
 \newtheorem{note}{Note}[section]
\theoremstyle{remark}
 \newtheorem{rem}{Remark}[section]
 
 \newcommand{\ol}{\overline}
\newcommand{\be}{\begin{equation}}
\newcommand{\ee}{\end{equation}}
\newcommand{\beas}{\begin{eqnarray*}}
\newcommand{\eeas}{\end{eqnarray*}}
\newcommand{\bea}{\begin{eqnarray}}
\newcommand{\eea}{\end{eqnarray}}

 \numberwithin{equation}{section}
\renewcommand{\leq}{\leqslant}
\renewcommand{\geq}{\geqslant}
\renewcommand{\setminus}{\smallsetminus}
\setlength{\textwidth}{30cc} \setlength{\textheight}{50cc}
\title[Entire function sharing a finite set ...]{\LARGE A note on entire functions sharing a finite set with applications to difference equations}
\subjclass[2010]{ Primary 30D35 Secondary 30D30}
\keywords{ Uniqueness, difference operator, entire function, shared set.}
\numberwithin {equation}{section}
\date{}
\author{Molla Basir Ahamed}
\address{ Department of Mathematics, Kalipada Ghosh Tarai Mahavidyalya, West Bengal, 734014, India.}
\email{basir\_math\_kgtm@yahoo.com, bsrhmd117@gmail.com}
\begin{document}
\vspace{18mm} \setcounter{page}{1} \thispagestyle{empty}
\begin{abstract}
Value distribution and uniqueness problems of difference operator of an entire function have been investigated in this article. This research shows that a finite ordered entire function $ f $ when sharing a set  $ \mathcal{S}=\{\alpha(z), \beta(z)\} $ of two entire functions $ \alpha $ and $ \beta $ with $ \max\{\rho(\alpha), \rho(\beta)\}<\rho(f) $ with its difference $ \mathcal{L}^n_c(f)=\sum_{j=0}^{n}a_jf(z+jc) $, then $ \mathcal{L}^n_c(f)\equiv f $, and more importantly certain form of the function $ f $ has been found. The results in this paper improve those given by \emph{k. Liu}, \emph{X. M. Li}, \emph{J. Qi, Y. Wang and Y. Gu} etc. Some examples have been exhibited to show the condition $ \max\{\rho(\alpha), \rho(\beta)\}<\rho(f) $ is sharp in our main result. Examples have been also exhibited to show that if $ CM $ sharing is replaced by $ IM $ sharing, then conclusion of the main results ceases to hold.
\end{abstract}
\maketitle
\section{\sc Introduction}
In this paper, a meromorphic function will always be non-constant and meromorphic in the complex plane C, unless specifically stated otherwise. In what follows, we assume that the reader is familiar with the elementary Nevanlinna theory,(see \cite{Hay & 1964,Lai & 1993,Yan & Yi & 2003}). In particular, for a meromorphic function $ f $, $ \mathcal{S}(f) $ denotes the family of all meromorphic function $ \zeta $ for which $ T(r,\zeta)=S(r,f)=o(T(r,f)) $, where $ r\rightarrow\infty $ outside of a possible set of finite logarithmic measure.For convenience, we agree that $ \mathcal{S}(f) $ includes all constant functions and $ \overline{ \mathcal{S}}(f):=\mathcal{S}(f)\cup\{\infty\}. $
\par A set $ \mathcal{S} $ is called a unique range set (URSE) for a certain class of entire functions if each inverse image of the set uniquely determines a function from the given class. Let $ \mathcal{S} $ be a finite set of some entire functions and $ f $ an entire function. Then, a set $ E_f(\mathcal{S}) $  is defined as \beas E_f(\mathcal{S})=\{(z,m)\in\mathbb{C}\times\mathbb{Z} : f(z)-a(z)=0 \text{\;with multiplicity}\; m, a\in\mathcal{S}\}. \eeas If we do not the count multiplicities, then we denote the set as $ \ol E_f(\mathcal{S}) $.Assume that $ g $ is another entire funct ion. We say that $ f $ and $ g $ share $ \mathcal{S} $ $ CM $ if $ E_f(\mathcal{S})=E_g(\mathcal{S}) $, and we say $ f $ and $ g $ share $ \mathcal{S} $ $ IM $ if $ \ol E_f(\mathcal{S})=\ol E_g(\mathcal{S}) $. Thus, a set $ \mathcal{S} $ is called $ URSE $ if $ E_f(\mathcal{S})=E_g(\mathcal{S}), $ where $ f $ and $ g $ are two entire functions, then $ f\equiv g. $\par \emph{Gross} and \emph{Yang} \cite{Gro & Yan & PJAS & 1982}, first found an example of a $ URSE $ which is $ \mathcal{S}=\{z: e^z+z=0\} $, and as this an infinite set, so it is very natural to investigate whether there exists a finite $ URSE $ or not. In $ 1995,$ \emph{Yi} \cite{Yi & NMJ & 1995} who found a $ URSE $ $ \mathcal{S}=\{z: z^n+az^m+b=0\} $, where $ n>2m+4 $ and $ a,\; b $ chosen in a such a way that the equations $ z^n+az^m+b=0 $ has no repeated roots. Since then there have been many efforts to study the problem of constructing $ URSE $ time to time (see \cite{Fra & Rei & CVTA & 1998,Fuj & NMJ & 2003,Li & Yan & KMJ & 1995}). There is another study on the $ URSE $ of entire functions, which is to seek a set $ \mathcal{S} $ for which if $ E_f(\mathcal{S})=E_{f^{\prime}}(\mathcal{S}) $, then $ f\equiv f^{\prime} $. One can verify that the form of the function will be $ f(z)=ce^{z}. $ where $ c $ is a non-zero complex number. \emph{Li} and \emph{Yang} \cite{Li & Yan & JMSJ & 1999} also deduced that if $ E_f(\mathcal{S})=E_{f^{\prime}}(\mathcal{S}) $, where $ \mathcal{S}=\{a,b\} $ with $ a+b\neq 0 $, then either $ f(z)=\mathcal{A}e^z $ or $ f(z)=\mathcal{A}e^z+a+b $, where $ \mathcal{A} $ is a non-zero complex number. Later, \emph{Fang} and \emph{Zalcman} \cite{Fan & Zal & JMMA & 2003}, using the theory of normal families, proved that there exists a set $ \mathcal{S}=\{a, b, c\} $ such that $ E_f(\mathcal{S})=E_{f^{\prime}}(\mathcal{S}) $, then $ f\equiv f^{\prime}. $ A special topic widely studied in the uniqueness theory is the case when $ f(z) $ shares value(s) or set(s) with its derivatives or differential polynomials. We recall a result of this type from the preceding literature:

\begin{theoA}\cite{Li & Yan & JMSJ & 1999}
	Let $ f $ be a non-constant entire function and $ a_1, a_2 $ be two distinct complex numbers. If $ E_f(\{a_1, a_2\})=E_{f^{\prime}}(\{a_1, a_2\}) $, then $ f $ takes one of the following conclusions:
	\begin{enumerate}
		\item $ f=f^{\prime} $;
		\item $ f+f^{\prime}=a_1+a_2 $;
		\item $ f(z)=c_1e^{cz}+c_2e^{-cz}, $ with $ a_1+a_2=0 $, where $ c_1, c_2, c $ are all non-zero complex numbers satisfying $ c^2\neq 1 $ and $ c_1c_2=\frac{1}{4}a_1^2\left(1-\frac{1}{c^2}\right). $
	\end{enumerate}
\end{theoA}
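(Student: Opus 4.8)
The plan is to introduce the classical auxiliary function and reduce everything to the case where it is constant. Write $b=a_1+a_2$, $p=a_1a_2$, $P=(f-a_1)(f-a_2)$, $Q=(f'-a_1)(f'-a_2)$, and set $\varphi=Q/P$. Since $E_f(\{a_1,a_2\})=E_{f'}(\{a_1,a_2\})$, the entire functions $P$ and $Q$ have exactly the same zeros with the same multiplicities, so $\varphi$ is entire and zero-free, hence $\varphi=e^{\alpha}$ for an entire $\alpha$. First I would record two structural facts: (i) every $a_1$- and $a_2$-point of $f$ (and of $f'$) is simple — a zero of $f-a_j$ of order $m\ge2$ forces $f'$ to vanish to order $m-1$ there, which cannot match an $a_i$-point of $f'$ of order $m$; and (ii) the identity
$$(f-f')(f+f'-a_1-a_2)=(f-a_1)(f-a_2)\,(1-e^{\alpha}),$$
obtained by subtracting $Q=e^{\alpha}P$ from $P$. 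If $f\equiv f'$ we are in conclusion (1) and if $f+f'\equiv a_1+a_2$ we are in conclusion (2); so from here on I assume neither, which forces $e^{\alpha}\not\equiv1$.

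Next I would dispose of the case $\varphi\equiv c$ constant, which is where conclusion (3) comes from. Here $c\ne0$ because $\varphi=e^{\alpha}$, and $c\ne1$ since $c=1$ makes the left side of the identity vanish, returning us to (1) or (2). Completing the square with $m=(a_1+a_2)/2$, $\delta=(a_1-a_2)/2$ rewrites $Q=cP$ as $(f'-m)^2=c(f-m-\beta)(f-m+\beta)$ with $\beta^2=\delta^2(c-1)/c\ne0$. A parity-of-multiplicity check at the zeros of $f-m\mp\beta$ (the right side has a zero there of the same order as $f-m\mp\beta$, the left side one of even order, so $f'$ vanishes there and the left side equals $m^2$) rules those zeros out when $m\ne0$; then $f-m-\beta$ and $f-m+\beta$ are zero-free non-constant entire functions $e^{p}$, $e^{q}$ with $e^{q}-e^{p}=2\beta$, and a Borel-type argument using the value $1$ of $e^{q-p}$ forces $q-p$ and then $e^{p}$ constant, so $f$ is constant — a contradiction. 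Hence $m=0$, i.e. $a_1+a_2=0$; now $(f')^2=cf^2+\delta^2(1-c)$, differentiating gives $f'(f''-cf)\equiv0$, so $f''\equiv cf$ and $f=c_1e^{\sqrt{c}\,z}+c_2e^{-\sqrt{c}\,z}$; substituting back pins down $c_1c_2=\tfrac14a_1^2(1-1/c)$ and gives $c_1,c_2\ne0$, which after renaming $\sqrt c$ as $c$ is exactly conclusion (3) with $c^2\ne1$.

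The remaining — and hardest — task is to prove that $\varphi$ must be constant, i.e. $\alpha$ is constant. The easy half is $\alpha'\in\mathcal{S}(f)$: from $\varphi=Q/P$,
$$\alpha'=\frac{Q'}{Q}-\frac{P'}{P}=\Big(\frac{f''}{f'-a_1}+\frac{f''}{f'-a_2}\Big)-\Big(\frac{f'}{f-a_1}+\frac{f'}{f-a_2}\Big),$$
so the lemma on the logarithmic derivative gives $m(r,\alpha')=S(r,f)$, and $\alpha'$ is entire. To upgrade this to $\alpha'\equiv0$ I would use the identity in (ii) together with the second fundamental theorem applied to $f$: because $f$ is entire, $T(r,f)\le\bar N(r,1/(f-a_1))+\bar N(r,1/(f-a_2))+S(r,f)$, and since the shared points are simple these $\bar N$'s are the corresponding $N$'s; estimating $m(r,(1-e^{\alpha}))$ through $m(r,(f-f')/(f-a_j))=S(r,f)$ and the first main theorem for $1/(f-a_i)$ then yields $T(r,e^{\alpha})\le\tfrac12T(r,f)+S(r,f)$, and feeding this bound plus $\alpha'\in\mathcal{S}(f)$ into the once-differentiated form of $Q=e^{\alpha}P$ (or comparing growth in (ii) after one differentiation) forces $\alpha'\equiv0$. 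I expect this last squeeze — from $\alpha'\in\mathcal{S}(f)$ down to $\alpha'\equiv0$ — to be the genuine obstacle, since the bare relation $Q=e^{\alpha}P$ only regenerates tautologies unless one uses essentially both the simplicity of the shared set and the way its $f$-preimage splits between the zeros of $f-f'$ and those of $f+f'-a_1-a_2$. Once $\alpha$ is constant, the second step applies and the proof is complete.
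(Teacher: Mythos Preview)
This paper does not contain a proof of Theorem~A: the result is quoted from Li--Yang (J.~Math.~Soc.~Japan~\textbf{51} (1999), 781--799) purely as background motivation, and no argument for it appears anywhere in the manuscript. There is therefore no ``paper's own proof'' to compare your attempt against.

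On the merits of your proposal itself: the set-up and the constant-$\varphi$ analysis (your second paragraph) are sound and do recover conclusions (1)--(3) correctly once $\varphi$ is known to be constant. The genuine gap is exactly where you flag it. Knowing $m(r,\alpha')=S(r,f)$, and even a bound of the shape $T(r,e^{\alpha})\le\tfrac12 T(r,f)+S(r,f)$, does not force $\alpha'\equiv0$: for that you need a \emph{lower} bound on $T(r,e^{\alpha})$ (or on a related quantity) that conflicts with the upper one when $\alpha$ is non-constant. The sentence ``feeding this bound $\ldots$ into the once-differentiated form of $Q=e^{\alpha}P$'' is not an argument --- differentiating $Q=e^{\alpha}P$ merely reproduces $\alpha'=Q'/Q-P'/P$, which you already used. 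The standard route in results of this type is to push the factorisation $(f-f')(f+f'-a_1-a_2)=P(1-e^{\alpha})$ harder: track how the (simple) $a_j$-points of $f$ distribute between the zeros of $f-f'$ and those of $f+f'-a_1-a_2$, apply the second fundamental theorem to $f'$ as well as to $f$, and squeeze until one obtains $T(r,f)=S(r,f)$ unless $e^{\alpha}$ is constant. As written, your third paragraph is a plan, not a proof.
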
 Recently, with a more general setting, namely $ k $-th derivative $ f^{(k)} $ or differential monomial $ M[f] $ or a more general setting namely differential polynomial $ P[f] $, the present author, deduced that, when some power of a meromorphic function $ f $ and its differential monomial (or polynomial) sharing a set \cite{Aha & CKMS & 2018} or a small function \cite{Aha & Lik & TJM & 2019}, then even in this case, the function $ f $ also assumes a certain form. From the literature of meromorphic functions on sharing value problems, finding the class of the functions satisfying some differential or difference equations gained a valuable space. \par Throughout this paper, we denote by $ \rho(f) $ and $ \lambda(f) $, the order of $ f $ and the exponent of convergence of zeros of $ f $ respectively (see \cite{Qi & Wang & Gu & ADE & 2019}): We also need the following notation: Let $ f $ be a non-constant meromorphic function, and we define difference operators as $ \Delta_c(f)=f(z+c)-f(z) $, and for $ n\geq 2 $, $ \Delta^n_c(f)=\Delta^{n-1}_c(\Delta_c(f)) $.   \par In the past recent years, the Navanlinna characteristic of $ f(z+c) $, the value distribution theory for difference polynomials, the Nevanlinna theory for the difference operator and most importantly, the difference analogue of the lemmas on the logarithmic derivative had been established (see \cite{Ban & Aha & MS & 2019,Chi & Fen & RJM & 2008,Hal & KOr & JMMA & 2006,Hal & Kor & PLMS & 2009,Heitto & JMMA & 2009,Heitto & CVEE & 2011,Liu & JMMA & 2009,Luo & Lin & JMMA & 2011,Zha & JMMA & 2010}). For these theories, since the derivative is a difference counterpart of a function, hence there has been recent study of whether the derivative $ f^{\prime} $ of a entire (meromorphic) function $ f $ can be replaced by the difference $ \Delta_c(f)=f(z+c)-f(z) $ in the above mentioned results. Number of researches have been done with difference operator (see\cite{Aha & SUBBM & 2019,
Ban & Aha & F & 2019,Ban & Aha & JCMA & 2020,Li & Gao & ADM & 2011,Li & CMFT & 2012,Li Chen & ADE & 2012,Li & Mei & Chen & 2017,Liu & JMMA & 2009,Qi & Wang & Gu & ADE & 2019})\par In this direction, in $ 2009 $, \emph{Liu} \cite{Liu & JMMA & 2009} considered the problem of sharing set by an entire function and its difference, and obtained the following result.
\begin{theoB}\cite{Liu & JMMA & 2009}
	Suppose that $ \alpha $ is is a non-zero complex number, and $ f $ is a transcendental entire function with finite order. If $ E_f (\{-\alpha, \alpha\})=E_{\Delta_c(f)} (\{-\alpha, \alpha\})$, then $ \Delta_c(f(z))=f(z) $ for all $ z\in\mathbb{C} $.
\end{theoB}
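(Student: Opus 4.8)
The plan is to convert the $CM$ set-sharing condition into an exponential identity, prove that the exponential factor is identically $1$, and then read off $\Delta_c(f)\equiv f$. First, since $\alpha\neq-\alpha$, the hypothesis $E_f(\{-\alpha,\alpha\})=E_{\Delta_c(f)}(\{-\alpha,\alpha\})$ means exactly that $f^2-\alpha^2$ and $(\Delta_c(f))^2-\alpha^2$ have the same zeros with the same multiplicities. As $f$ has finite order, so do $f(z+c)$ and $\Delta_c(f)=f(z+c)-f(z)$ (indeed $T(r,\Delta_c(f))\le 2T(r,f)+S(r,f)$), so the quotient $\big((\Delta_c(f))^2-\alpha^2\big)\big/\big(f^2-\alpha^2\big)$ is entire, zero-free and of finite order; hence it equals $e^{Q}$ for a polynomial $Q$, and
\be
\big(\Delta_c(f)-\alpha\big)\big(\Delta_c(f)+\alpha\big)=e^{Q}(f-\alpha)(f+\alpha).
\ee
It now suffices to establish $e^{Q}\equiv1$: then $\big(\Delta_c(f)-f\big)\big(\Delta_c(f)+f\big)\equiv0$, and since the ring of entire functions is an integral domain, either $\Delta_c(f)\equiv f$, as wanted, or $\Delta_c(f)\equiv-f$, the latter forcing $f(z+c)\equiv0$, which is impossible for a transcendental entire function.

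Next --- and this is the crux --- I would prove that $e^{Q}$ is a constant. Differentiating the identity above and eliminating $e^{Q}$ gives
\be
Q'=\frac{2\,\Delta_c(f)\,\big(\Delta_c(f)\big)'}{\big(\Delta_c(f)\big)^2-\alpha^2}-\frac{2ff'}{f^2-\alpha^2},
\ee
and one would estimate the right side using the lemma on the logarithmic derivative, its difference analogue $m\big(r,f(z+c)/f\big)=S(r,f)$ (available because $\rho(f)<\infty$), and Nevanlinna's second main theorem for $f$ at $0,\alpha,-\alpha$; combined with a Clunie-type argument applied to $f^2e^{Q}=(\Delta_c(f))^2+\alpha^2(e^{Q}-1)$, the aim is to force $T(r,e^{Q})=S(r,f)$, hence $e^{Q}\equiv\lambda$ for a nonzero constant $\lambda$. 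I expect this to be the main obstacle: the naive estimate of $Q'$ only yields $m(r,e^{Q})=O(T(r,f))$, which is consistent with $\deg Q=\rho(f)$, so one genuinely has to exploit the arithmetic of $\Delta_c(f)$ --- that its $\pm\alpha$-points are at once $\pm\alpha$-points of $f$ and, after the shift by $c$, zeros of $f(z+c)\big(f(z+c)-2\alpha\big)\big(f(z+c)+2\alpha\big)$ --- together with the multiplicity bookkeeping forced by the $CM$ hypothesis, and very possibly to treat the resonant case $\deg Q=\rho(f)$ separately.

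Finally, with $e^{Q}\equiv\lambda$ one has $(\Delta_c(f))^2-\lambda f^2=\alpha^2(1-\lambda)$. Suppose $\lambda\neq1$ and fix $\mu$ with $\mu^2=\lambda$; then $\big(\Delta_c(f)-\mu f\big)\big(\Delta_c(f)+\mu f\big)=\alpha^2(1-\lambda)$ is a nonzero constant, so both factors are zero-free entire functions of finite order, hence exponentials of polynomials, and --- their product being constant --- one obtains $f=Ae^{-h}+Be^{h}$ with $h$ a polynomial and $A,B$ nonzero constants (here $\alpha\neq0$ guarantees $A\neq0$). Substituting this into $\Delta_c(f)=f(z+c)-f(z)$ and using the linear independence of $e^{h}$ and $e^{-h}$ over functions of order $<\deg h$ (a consequence of Borel's theorem on exponential sums), $h$ must be linear; matching the coefficients of $e^{h}$ and $e^{-h}$ then yields $\lambda=0$, contrary to $e^{Q}$ being zero-free. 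Hence $\lambda=1$, and by the observation in the first paragraph $\Delta_c(f)\equiv f$. The computations of the first and last paragraphs should be routine; all the difficulty is concentrated in showing $e^{Q}$ is constant.
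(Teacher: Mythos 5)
A preliminary remark: Theorem B is a quoted result of Liu (2009) and is nowhere proved in this paper, so there is no in-paper proof to measure your attempt against; the nearest internal analogue is the proof of Theorem 2.1, which depends essentially on the extra hypothesis $\lambda(f)<\rho(f)$ (it is what licenses writing $f=\mathcal{G}e^{P}$ with $\mathcal{G}$ small relative to $e^{P}$ and comparing coefficients of $e^{P}$). Theorem B carries no such hypothesis, so you cannot close your argument by importing the paper's machinery either.

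Your opening and closing paragraphs are sound: the $CM$ sharing of $\{-\alpha,\alpha\}$ does give $(\Delta_c(f)-\alpha)(\Delta_c(f)+\alpha)=e^{Q}(f-\alpha)(f+\alpha)$ with $Q$ a polynomial, and, granted $e^{Q}\equiv\lambda$ constant, your factorization $(\Delta_c(f)-\mu f)(\Delta_c(f)+\mu f)=\alpha^2(1-\lambda)$, the resulting representation $f=Ae^{-h}+Be^{h}$ with $A,B\neq0$, the Borel argument forcing $\deg h=1$, and the coefficient matching $e^{-ac}=1+\mu$, $e^{ac}=1-\mu$, hence $\mu^2=0$, correctly eliminate $\lambda\neq1$; and $\lambda=1$ yields $\Delta_c(f)\equiv f$ since the branch $\Delta_c(f)\equiv-f$ would force $f(z+c)\equiv0$. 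The genuine gap is the middle step, which you yourself flag: you never prove that $e^{Q}$ is constant, and that step is the entire content of the theorem. The two devices you name do not, as described, deliver it. Estimating $Q'$ by logarithmic derivatives gives, as you concede, only $m(r,e^{Q})=O(T(r,f))$, which is consistent with $\deg Q=\rho(f)$ and excludes nothing. The identity $f^2e^{Q}=(\Delta_c(f))^2+\alpha^2(e^{Q}-1)$ is not in Clunie form: the Clunie/Laine--Yang difference lemma requires the right-hand side to be a difference polynomial in $f$ whose coefficients are small functions of $f$, whereas here the coefficient $\alpha^2(e^{Q}-1)$ is precisely the quantity whose smallness you are trying to establish, so the appeal is circular. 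What is missing is a genuine value-distribution argument (in Liu's source it runs through the second main theorem applied at $\pm\alpha$ to $f$ and to $\Delta_c(f)$, combined with $m(r,\Delta_c(f)/f)=S(r,f)$, to pin down the characteristic of $e^{Q}$); until some such argument is supplied, your proposal is a correct frame around a missing proof.
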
 \par Since $ \Delta_c(f) $ is a very special form of the setting $ \mathcal{L}_c(f):=a_1f(z+c)+a_0f(z) $, where $ a_1(\neq 0), a_0\in\mathbb{C} $ (see \; \cite{Aha & RM & 2019}). Therefore a natural quarry would be as the following:
\begin{ques}\label{qn1.1}
	Does \emph{Theorem B}\; still hold if we replace $ \Delta_c(f) $ by $ \mathcal{L}_c(f) $ ?
\end{ques}
\par From the following two examples, one can ensure that answer of \emph{Question \ref{qn1.1} } is not affirmative.
\begin{exm}
	Let $ f(z)=e^z $, and for $ c=\log(\log 2),$ suppose that \beas \mathcal{L}_c(f)=\frac{\sqrt{5}+\i\sqrt{7}}{\log 2}f(z+c)+(1-\sqrt{5}-i\sqrt{7})f(z).\eeas  It is clearly $ f $ is a finite order entire function and $ E_f(\mathcal{S})= E_{\mathcal{L}_c(f)}(\mathcal{S}) $, where $ \mathcal{S}=\{-\alpha,\alpha\} $, $ \alpha\in\mathbb{C}\setminus\{0\} $ but $ \mathcal{L}_c(f)\neq f. $
\end{exm}
\begin{exm}
	Let $ f(z)=sin(z) $ or $ cos(z) $ and \beas \mathcal{L}_{\pi}(f)=(1+i\sqrt{2})f(z+\pi)+i\sqrt{2}f(z).\eeas  Clearly $ f $ is of finite order and $ E_f(\mathcal{S})= E_{\mathcal{L}_{\pi}(f)}(\mathcal{S}) $, where $ \mathcal{S}=\{-\alpha,\alpha\} $, $ \alpha\in\mathbb{C}\setminus\{0\} $ but $ \mathcal{L}_{\pi}(f)\neq f. $
\end{exm}
\par To investigate with $ \mathcal{L}_c(f) $ to get the similar conclusion as of \emph{Theorem B}, we require some extra conditions. Instead of looking for some general setting of the difference, it is therefore reasonable to concentrate for the generalization of the shared set $ \{-\alpha,\;\alpha\} $.\par
In this direction, we recall here a question proposed by \emph{Liu} in \cite{Liu & JMMA & 2009} as follows.
\begin{ques}\label{q1.1}
	Let $ \alpha $ and $ \beta $ be two small functions of $ f $ with period $ c $. When a transcendental entire function $ f $ of finite order and its difference $ \Delta_c(f) $ share the set $ \{\alpha, \beta\} $ $ CM $, what can we say about the relationship between $ f $ and $ \Delta_c(f) $ ?
\end{ques}
   \par In connection with the \emph{Question \ref{q1.1}}, in $ 2012 $, \emph{Li} \cite{Li & CMFT & 2012} established the following result.
   \begin{theoC}\cite{Li & CMFT & 2012}
   	Suppose that $ \alpha $, $ \beta $ are two distinct entire functions, and $ f $ is a non-constant entire function with $ \rho(f)\neq 1 $ and $ \lambda(f)<\rho(f)<\infty $ such that $ \max\{\rho(\alpha), \rho(\beta)\}<\rho(f) $. If $ E_f (\{\alpha, \beta\})=E_{\Delta_c(f)} (\{\alpha, \beta\})$, then $ \Delta_c(f(z))\equiv f(z) $ for all $ z\in\mathbb{C} $.
   \end{theoC}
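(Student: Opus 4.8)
The plan is to introduce the auxiliary function
\[
\phi=\frac{(\Delta_c f-\alpha)(\Delta_c f-\beta)}{(f-\alpha)(f-\beta)}
\]
and to prove that it is identically $1$. First I would record that $\lambda(f)<\rho(f)<\infty$ together with $\rho(f)\neq 1$ forces, through the Hadamard factorisation, $f=Qe^{g}$ with $Q$ entire, $\rho(Q)=\lambda(f)$ and $g$ a polynomial of degree $q:=\rho(f)\geq 2$; hence $p(z):=g(z+c)-g(z)$ is a polynomial of degree $q-1\geq 1$ and $\Delta_c f=e^{g}\bigl(Q(z+c)e^{p}-Q\bigr)=e^{g}R$, where $R$ has order $<q$ and $Q\not\equiv 0$. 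One also checks that $\Delta_c f\not\equiv 0$: otherwise $E_{\Delta_c f}(\mathcal{S})$ would reduce to the zeros of $\alpha\beta$, whose counting function is $S(r,f)$, contradicting the second main theorem applied to $f$. Since $\alpha,\beta$ are distinct entire functions with $\max\{\rho(\alpha),\rho(\beta)\}<\rho(f)$, they are small functions of $f$, and the $CM$ set-sharing hypothesis says exactly that $(f-\alpha)(f-\beta)$ and $(\Delta_c f-\alpha)(\Delta_c f-\beta)$ have the same zeros with the same multiplicities (the discrete set of points where $\alpha=\beta$ being handled in the routine way); consequently $\phi$ is entire and zero-free, so $\phi=e^{h}$ for some polynomial $h$.

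The crucial estimate is $T(r,\phi)=S(r,f)$. Writing $\phi=\frac{\Delta_c f-\alpha}{f-\alpha}\cdot\frac{\Delta_c f-\beta}{f-\beta}$ and using $\frac{\Delta_c f-\alpha}{f-\alpha}=\frac{\Delta_c(f-\alpha)}{f-\alpha}+\frac{\alpha(z+c)-2\alpha(z)}{f-\alpha}$, the difference analogue of the lemma on the logarithmic derivative gives $m\!\left(r,\frac{\Delta_c f-\alpha}{f-\alpha}\right)\leq m\!\left(r,\frac{1}{f-\alpha}\right)+S(r,f)$, and similarly with $\beta$ in place of $\alpha$. On the other hand, the second main theorem applied to $f$ with the small targets $\alpha$, $\beta$, $0$, $\infty$, together with $\overline N(r,1/f)\leq N(r,1/Q)=S(r,f)$ and $\overline N(r,f)=0$, yields
\[
2T(r,f)\leq \overline N(r,1/(f-\alpha))+\overline N(r,1/(f-\beta))+S(r,f),
\]
whence $m(r,1/(f-\alpha))+m(r,1/(f-\beta))=S(r,f)$. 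As $\phi$ is entire, $T(r,\phi)=m(r,\phi)\leq m(r,1/(f-\alpha))+m(r,1/(f-\beta))+S(r,f)=S(r,f)$; in particular $\deg h<q$, so $\phi$ is a small function of $f$.

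Next I would substitute $f=Qe^{g}$ and $\Delta_c f=e^{g}R$ into $\phi(f-\alpha)(f-\beta)=(\Delta_c f-\alpha)(\Delta_c f-\beta)$ to obtain, with $u:=e^{g}$,
\[
\bigl(R^{2}-\phi Q^{2}\bigr)u^{2}-(\alpha+\beta)\bigl(R-\phi Q\bigr)u+\alpha\beta(1-\phi)=0.
\]
All three coefficients are small functions of $f$, while $u=e^{g}$ has order $q\geq 2$ and hence is not a small function; since $e^{g}$ cannot satisfy a non-trivial polynomial relation over the field of small functions of $f$, all three coefficients must vanish identically: $R^{2}\equiv\phi Q^{2}$, $(\alpha+\beta)(R-\phi Q)\equiv 0$, $\alpha\beta(1-\phi)\equiv 0$. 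If $\alpha\not\equiv 0$ and $\beta\not\equiv 0$, the last relation gives $\phi\equiv 1$; if $\alpha\equiv 0\not\equiv\beta$ (the case $\alpha\equiv\beta\equiv 0$ being excluded since $\alpha\neq\beta$), the middle relation forces $R\equiv\phi Q$, and together with $R^{2}\equiv\phi Q^{2}$ and the fact that $\phi$ has no zeros this again gives $\phi\equiv 1$. Finally, $\phi\equiv 1$ turns the defining identity into $(\Delta_c f-f)(\Delta_c f+f-\alpha-\beta)=0$; the second factor would mean $f(z+c)\equiv\alpha(z)+\beta(z)$, which is impossible because $\rho(f(z+c))=\rho(f)>\max\{\rho(\alpha),\rho(\beta)\}$. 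Therefore $\Delta_c f\equiv f$.

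I expect the main obstacle to be the estimate $T(r,\phi)=S(r,f)$: this is where one must correctly combine the second main theorem for small (moving) functions, which guarantees that $f-\alpha$ and $f-\beta$ carry almost all of the zeros, with the difference version of the logarithmic derivative lemma; the bookkeeping at the points where $\alpha(z)=\beta(z)$, as well as dismissing the degenerate possibility $\Delta_c f\equiv 0$, also has to be done with some care. Once $\phi$ is known to be a small function, the remaining algebraic argument built on the Hadamard form $f=Qe^{g}$ is routine.
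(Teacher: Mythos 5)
Your argument is correct in substance, but it takes a genuinely different route from the one used in this paper (which proves the generalisation, Theorem 2.1, rather than Theorem C itself). Both proofs start the same way: CM sharing of the set gives $(\Delta_cf-\alpha)(\Delta_cf-\beta)=e^{h}(f-\alpha)(f-\beta)$ with $h$ a polynomial, and Hadamard factorisation gives $f=Qe^{g}$, $\Delta_cf=Re^{g}$ with $Q,R$ small relative to $e^{g}$. From there the paper never estimates $e^{h}$ itself; it applies the second main theorem to $e^{g}$ with the moving target $\alpha/Q$ and splits $\overline N\bigl(r,1/(e^{g}-\alpha/Q)\bigr)$ according to whether those zeros are shared with $e^{g}-\alpha/R$ or with $e^{g}-\beta/R$; in each case one of the two counting functions is not $S(r,e^{g})$, which forces an identity between small functions ($Q\equiv R$, or $\alpha/Q\equiv\beta/R$, etc.) because the difference of two distinct small functions has only $S(r,e^{g})$ zeros, and the Bergweiler--Langley $\epsilon$-set lemma then pins down the form of $f$. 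You instead prove that the quotient $\phi=e^{h}$ is itself a small function of $f$ (via the difference analogue of the logarithmic derivative lemma plus the second main theorem for small targets) and then read the sharing identity as a quadratic in $u=e^{g}$ over the field of small functions, so all three coefficients vanish; this gives $\phi\equiv1$ in one stroke and reduces the theorem to excluding $f(z+c)\equiv\alpha+\beta$ by an order count. Your route is shorter and avoids the case analysis; the paper's route is longer but also produces the explicit exponential form of $f$, which is what Theorems D and 2.1 are after.

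Two points should be tightened. First, your bound $m\bigl(r,1/(f-\alpha)\bigr)+m\bigl(r,1/(f-\beta)\bigr)=S(r,f)$ invokes a truncated second main theorem for the four moving targets $0,\alpha,\beta,\infty$, which is Yamanoi's deep theorem; since you only need untruncated counting functions, the classical Steinmetz--Osgood version suffices, or, simpler still, apply the three-small-functions theorem to $e^{g}$ with targets $0$, $\alpha/Q$, $\infty$ as the paper does. Second, when $\alpha\equiv0$ the claim $m\bigl(r,1/(f-\alpha)\bigr)=S(r,f)$ is false (since $\lambda(f)<\rho(f)$, $f$ has few zeros and $m(r,1/f)\sim T(r,f)$); but in that case the correction term $\bigl(\alpha(z+c)-2\alpha(z)\bigr)/(f-\alpha)$ vanishes identically and $m(r,\Delta_cf/f)=S(r,f)$ directly, so the conclusion $T(r,\phi)=S(r,f)$ survives. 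This case split should be made explicit rather than left implicit.
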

\begin{rem}
	Next example confirms that conclusion of \emph{Theorem C} still holds if we remove the condition $ \rho(f)\neq 1 $.
\end{rem}
\begin{exm}
	Let $ f(z)=2^{z/{2\pi}}\cos(z) $, and $ \mathcal{S}=\{\alpha(z), \beta(z)\} $, where $ \alpha(z) $ and $ \beta(z) $ are two non-constant polynomials in $ z $. It is clear that $ \rho(f)=1 $, and $ \max\{\rho(\alpha), \rho(\beta)\}=0<1=\rho(f) $, and $ E_f (\{\alpha, \beta\})=E_{\Delta_{2\pi}(f)} (\{\alpha, \beta\})$, and $ \Delta_c(f(z))=f(z) $ for all $ z\in\mathbb{C} $.
\end{exm}\par Thus, one natural question is : Can we prove Theorem C by omitting the restriction $ \rho(f)\neq 1 $. Recently, \emph{Qi, Wang} and \emph{Gu} \cite{Qi & Wang & Gu & ADE & 2019} answered this question by proving the following result.

\begin{theoD}\cite{Qi & Wang & Gu & ADE & 2019}
	   	Suppose that $ \alpha $, $ \beta $ are two distinct entire functions, and $ f $ is a non-constant entire function with $ \lambda(f)<\rho(f)<\infty $ such that $ \max\{\rho(\alpha), \rho(\beta)\}<\rho(f) $. If $ E_f (\{\alpha, \beta\})=E_{\Delta_c(f)} (\{\alpha, \beta\})$, then  $ f(z)=\mathcal{A}e^{\lambda z}$, where $ \mathcal{A} $, $ \lambda $ are two non-zero complex numbers satisfying $ e^{\lambda c}=2 $. Furthermore $ \Delta_c(f(z))\equiv f(z) $ for all $ z\in\mathbb{C} $.
\end{theoD}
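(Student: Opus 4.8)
\noindent\emph{Sketch of a proof.}
The plan is to turn the set-sharing hypothesis into an exponential identity, collapse that identity to $\Delta_c f\equiv f$, and then read off the shape of $f$ from the relation $f(z+c)=2f(z)$ by combining Hadamard factorisation with a growth estimate. First I would use $\lambda(f)<\rho(f)<\infty$ to write $f=Pe^{Q}$ by Hadamard's theorem, where $P$ is the canonical product over the zeros of $f$ (so $\rho(P)=\lambda(f)$) and $Q$ is a polynomial; since $\lambda(f)<\rho(f)$, the degree $n:=\deg Q$ equals $\rho(f)$, which is a positive integer, and $\rho(P)<n$. Put $R:=Q(\cdot+c)-Q$ (of degree $n-1$), $g:=\Delta_c f$, and $P_1:=P(\cdot+c)e^{R}-P$, so that $g=P_1e^{Q}$ and $\rho(P_1)\le\max\{n-1,\lambda(f)\}<n$. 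The degenerate case $g\equiv0$ is ruled out first: then $f$ is $c$-periodic and $E_g(\mathcal S)$ is carried by the zeros of $\alpha\beta$, so the Second Main Theorem for $f$ with the three small targets $\alpha,\beta,\infty$ would give $T(r,f)=S(r,f)$, impossible for transcendental $f$. Otherwise $E_f(\mathcal S)=E_g(\mathcal S)$ means $(f-\alpha)(f-\beta)$ and $(g-\alpha)(g-\beta)$ have the same zeros with the same multiplicities, so
\[
(f-\alpha)(f-\beta)=e^{h}\,(g-\alpha)(g-\beta)
\]
for a polynomial $h$.

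Next I would prove $\deg h<n$. Substituting $f=Pe^{Q}$, $g=P_1e^{Q}$ rewrites the identity as
\[
e^{h}-1=\frac{e^{Q}(P-P_1)\bigl[(P+P_1)e^{Q}-(\alpha+\beta)\bigr]}{(g-\alpha)(g-\beta)}.
\]
In the $n$ sectors where $\operatorname{Re}Q\to+\infty$ one finds, off the sparse zeros of $P$ and $P_1$, that $e^{h}\to(P/P_1)^{2}$, and in the $n$ sectors where $\operatorname{Re}Q\to-\infty$ that $e^{h}\to1$ (this is where $\max\{\rho(\alpha),\rho(\beta)\}<n$ enters). Hence $\operatorname{Re}h(z)=O(|z|^{\,n-\sigma})$ for some $\sigma>0$ outside thin neighbourhoods of the $2n$ curves $\operatorname{Re}Q=0$; each of the intervening sectors has opening $<\pi/n$ and $\deg h\le n$, so Phragm\'en--Lindel\"of extends the bound to the whole plane, and Borel--Carath\'eodory then gives $\deg h\le n-1$. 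Now group the identity as $Ae^{2Q}+Be^{Q}+C\equiv0$, with $A=P^{2}-e^{h}P_1^{2}$, $B=-(\alpha+\beta)(P-e^{h}P_1)$, $C=\alpha\beta(1-e^{h})$, all of order $<n$. If $A\not\equiv0$ then $e^{2Q}=-(B/A)e^{Q}-C/A$, forcing $2T(r,e^{Q})\le T(r,e^{Q})+S(r,e^{Q})$, which is false; hence $A\equiv0$, and then $B\equiv0$ and $C\equiv0$ in turn. Since $\alpha\beta\not\equiv0$ (the case $0\in\mathcal S$ only drops one term), $C\equiv0$ gives $e^{h}\equiv1$, whence $P^{2}\equiv P_1^{2}$; as $P\equiv-P_1$ would force $P\equiv0$, we get $P\equiv P_1$. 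Thus $(f-\alpha)(f-\beta)\equiv(g-\alpha)(g-\beta)$, i.e. $(f-g)(f+g-\alpha-\beta)\equiv0$; the branch $f+g\equiv\alpha+\beta$ gives $f(z+c)\equiv\alpha(z)+\beta(z)$, of order $<\rho(f)$, impossible. Therefore $f\equiv g$, that is, $f(z+c)=2f(z)$.

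Finally I would deduce $f(z)=\mathcal Ae^{\lambda z}$ with $e^{\lambda c}=2$ from $f(z+c)=2f(z)$ and $\lambda(f)<\rho(f)<\infty$; this is precisely the step that removes the hypothesis $\rho(f)\neq1$ of Theorem C. If $f$ had a zero $z_0$, then comparing orders of vanishing in $f(z+c)=2f(z)$ shows that every point of $\{z_0+kc:k\in\mathbb Z\}$ is a zero of $f$ of the same multiplicity, so $\lambda(f)\ge1$ and thus $n\ge2$. Then $\deg R=n-1\ge1$, and iterating $P(z+c)=2e^{-R}P(z)$ yields $P(z+Nc)=2^{N}e^{-\Sigma_N(z)}P(z)$, where $\Sigma_N(z)=\sum_{k=0}^{N-1}R(z+kc)$ is a polynomial in $z$ of degree $n-1$ whose leading coefficient grows like $N$. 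For a small fixed $\delta>0$ and each large $N$, choosing $z_N$ with $|z_N|\asymp N^{1+\delta}$, with $\arg z_N$ arranged so that $\operatorname{Re}\Sigma_N(z_N)\le -c_1N^{\,1+(1+\delta)(n-1)}$ for some $c_1>0$ (the leading term of $\Sigma_N$ dominates on this circle), and avoiding the exceptional set of the minimum-modulus estimate so that $\log|P(z_N)|\ge -|z_N|^{\lambda(f)+\varepsilon}$, one gets
\[
\log\bigl|P(z_N+Nc)\bigr|\ge c_1N^{\,1+(1+\delta)(n-1)}-N^{(1+\delta)(\lambda(f)+\varepsilon)};
\]
for $\varepsilon,\delta$ small enough (depending on $n-\lambda(f)$) the first term dominates, and since $|z_N+Nc|=O(N^{1+\delta})$ this forces $\rho(P)\ge\frac{1}{1+\delta}+(n-1)>\lambda(f)$, contradicting $\rho(P)=\lambda(f)$. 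Hence $f$ has no zeros, so $f=\mathcal Ae^{Q}$, and $f(z+c)=2f(z)$ makes $Q(z+c)-Q(z)$ equal to the constant $\log2+2\pi i k$ for some integer $k$; thus $\deg Q=1$, $f(z)=\mathcal Ae^{\lambda z}$ and $e^{\lambda c}=2$. Then $\Delta_c f=\mathcal Ae^{\lambda z}(e^{\lambda c}-1)=f$, as required.

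I expect the last paragraph to be the crux: showing that $f(z+c)=2f(z)$ cannot hold with $1<\rho(f)<\infty$ when $\lambda(f)<\rho(f)$. This is exactly where $\lambda(f)<\rho(f)$ is indispensable — without it, quasi-periodic factors of theta-function type (which satisfy $P(z+c)=e^{(\mathrm{linear})}P(z)$ and have order $2$) would persist and yield order-$2$ solutions of $f(z+c)=2f(z)$. A secondary technical point is the Phragm\'en--Lindel\"of estimate giving $\deg h\le n-1$; the remaining reductions are routine Nevanlinna theory.
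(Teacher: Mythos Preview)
Your overall architecture is sound and reaches the right conclusion, but your route differs from the paper's in both of its two main stages, and the Phragm\'en--Lindel\"of step is under-justified as written.

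\medskip
\noindent\textbf{Stage 1: obtaining $\Delta_c f\equiv f$.} The paper (proving the more general Theorem~2.1, specialised here to $\Delta_c$) never bounds $\deg h$. Instead it rewrites the identity as
\[
e^{h}=\frac{P_1^{2}\bigl(e^{Q}-\alpha/P_1\bigr)\bigl(e^{Q}-\beta/P_1\bigr)}{P^{2}\bigl(e^{Q}-\alpha/P\bigr)\bigl(e^{Q}-\beta/P\bigr)},
\]
notes that $P,P_1,\alpha,\beta$ are small relative to $e^{Q}$, and applies the Second Main Theorem for small functions to $e^{Q}$: almost every zero of $e^{Q}-\alpha/P$ is a zero of $e^{Q}-\alpha/P_1$ or of $e^{Q}-\beta/P_1$, and a common zero of $e^{Q}-a$ and $e^{Q}-b$ with small $a,b$ is a zero of $a-b$, itself small. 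This forces $\alpha/P\equiv\alpha/P_1$ or $\alpha/P\equiv\beta/P_1$; a symmetric pass with $\beta$ closes the case analysis and gives $P\equiv P_1$, i.e.\ $\Delta_c f\equiv f$. Your Borel route via $Ae^{2Q}+Be^{Q}+C\equiv0$ reaches the same place, but it rests on $\deg h\le n-1$, and your sector argument for that is delicate: in the sectors where $\operatorname{Re}Q\to+\infty$ you need $e^{h}\approx(P/P_1)^{2}$, yet near a zero of $P_1$ the denominator $(P_1-\alpha e^{-Q})(P_1-\beta e^{-Q})$ has size $\asymp|\alpha\beta|e^{-2\operatorname{Re}Q}$, so $|e^{h}|$ blows up like $e^{2\operatorname{Re}Q}$ there. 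You must therefore excise disks around the zeros of $P$ and $P_1$ in addition to the thin strips about $\operatorname{Re}Q=0$, and then argue that Phragm\'en--Lindel\"of still closes over this more complicated exceptional set. That can be done, but it is not the one-liner you sketch. A cleaner fix, keeping your approach, is to skip PL and apply Borel's lemma directly to the six-term exponential sum with exponents $0,Q,2Q,h,Q+h,2Q+h$ and coefficients of order $<n$; the forced coincidences among leading terms then yield $\deg h<n$ anyway.

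\medskip
\noindent\textbf{Stage 2: obtaining $\rho(f)=1$.} Here the paper is much shorter. From $P(z+c)e^{R(z)}=2P(z)$ it writes $e^{R}=2P(z)/P(z+c)$ and invokes the Chiang--Feng estimate $m\bigl(r,P(z)/P(z+c)\bigr)=O\bigl(r^{\rho(P)-1+\varepsilon}\bigr)$; since $T(r,e^{R})\asymp r^{\,n-1}$, this forces $n-1\le\rho(P)-1+\varepsilon<n-1$ whenever $n\ge2$, a contradiction. Hence $n=1$, $R$ is constant, and the Bergweiler--Langley lemma $P(z+c)/P(z)\to1$ (valid because $\rho(P)<1$) pins down $e^{R}=2$ and makes $P$ periodic, hence constant. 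Your iterated growth/minimum-modulus argument reaches the same endpoint and the exponent bookkeeping checks out, but it is longer and requires choosing $z_N$ simultaneously on the right ray and outside the minimum-modulus exceptional set, which deserves an explicit sentence.

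\medskip
In short: the paper replaces your PL/Borel step by the Second Main Theorem for small functions, and your iteration by a single application of the Chiang--Feng estimate. Your argument is more self-contained (no small-function Second Main Theorem, no difference logarithmic-derivative lemma) but carries real technical overhead in the PL step that you have not fully discharged.
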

\begin{note}
	In view of \emph{Examples 1.1} and \emph{1.2}, we have seen that in \emph{Theorem B}, it is not possible to replace $ \Delta_c(f) $ by $ \mathcal{L}_c(f) $ in general, but, the next two examples show that in case of \emph{Theorem D}, one can do it.
\end{note}
\begin{exm}
	Let $ f(z)=\displaystyle\left(\frac{\pi}{2}\right)^{z/c} $ and  $ \mathcal{L}_c(f)=2f(z+c)+(1-\pi)f(z). $ Let $ \mathcal{S}=\{\alpha, \beta\} $, where $ \alpha $ and $ \beta $ are two entire functions with $ \rho(\alpha)<1 $ and $ \rho(\beta)<1 $. Evidently, $ E_f(\mathcal{S})=E_{\mathcal{L}_c(f)}(\mathcal{S}) $ with $ \max\{\rho(\alpha), \rho(\beta)\}<\rho(f) $, and also $ \mathcal{L}_c(f(z))=f(z) $.
\end{exm}
\begin{exm}
	Let $ f(z)=(1+i)^{z/c} $ and \beas \mathcal{L}_c(f)=\left(1+i\sqrt{3}\right)f(z+c)+\left(\sqrt{3}-i\left(\sqrt{3}+1\right)\right)f(z). \eeas Let $ \mathcal{S}=\{\alpha, \beta\} $, where $ \alpha $ and $ \beta $ are two entire functions with $ \rho(\alpha)<1 $ and $ \rho(\beta)<1 $. Then, clearly $ E_f(\mathcal{S})=E_{\mathcal{L}_c}(\mathcal{S}) $ with $ \max\{\rho(\alpha), \rho(\beta)\}<\rho(f) $, and also $ \mathcal{L}_c(f(z))=f(z) $.
\end{exm}
\begin{note}
	It is not hard to check that the functional form is $ f(z)=\displaystyle \left(\frac{1-a_0}{a_2}\right)^{z/c}g(z) $, where $ g $ is a $ c $-periodic function, when $ f $ satisfies the relation $ \mathcal{L}_c(f)=f $.
\end{note}
\par In this paper, we are mainly concerned for a more generalization of $ \Delta_c(f) $, and $ \mathcal{L}_c(f) $, hence we define  \beas \mathcal{L}^n_c(f)=a_nf(z+nc)+a_{n-1}f(z+(n-1)c)+\ldots+a_1f(z+c)+a_0f(z),\eeas  where $ a_n(\neq 0), a_i\in\mathbb{C} $ for $ (i=0, 1, 2, \ldots, n-1) $. If we choose the coefficients as $ a_j=\displaystyle(-)^{j}\binom nj $, $ j=0, 1, 2, \ldots, n $, then $ \mathcal{L}^n_c(f)=\Delta^n_c(f) $. With this generalization, our aim is to study \emph{Theorems C} and \emph{D} further. So it is interesting to ask the following questions regarding \emph{Theorems C} and \emph{D}.
\begin{ques}
	In \emph{Theorems C} and \emph{D}, what happen if we replace $ \Delta_c(f) $ by $\mathcal{L}^n_c(f) $ ?
\end{ques}
\begin{ques}
	Can we get a corresponding result like \emph{Theorems C} and \emph{D}, in which the condition $ \max\{\rho(\alpha), \rho(\beta)\}<\rho(f) $ is sharp ?
\end{ques}
\begin{ques}
	What can be say about the specific form of the function $ f $ when $ \Delta_c(f) $ is replaced by $ \mathcal{L}_c(f) $ ?
\end{ques}
\section{\sc Main result}
In this article, we dealt with the above questions and answered them all affirmatively. Following is the main result of this paper.
\begin{theo}\label{th2.1}
Suppose that $ \alpha $, $ \beta $ are two distinct entire functions, and $ f $ is a non-constant entire function with $ \lambda(f)<\rho(f)<\infty $ such that $ \max\{\rho(\alpha), \rho(\beta)\}<\rho(f) $ and $ \mathcal{L}^n_c(f)(\not\equiv 0) $. If $ E_f (\{\alpha, \beta\})=E_{\mathcal{L}^n_c(f)} (\{\alpha, \beta\})$, then  \beas f(z)=\mathcal{A}_1\lambda_1^{z/c}+\mathcal{A}_2\lambda_2^{z/c}+\ldots+\mathcal{A}_n\lambda_n^{z/c},\eeas where $ \mathcal{A}_i, \lambda_i\in\mathbb{C}\setminus\{0\} $ for $ i=1, 2, \ldots,n $,  and $ \lambda_i $ are the roots of the equation \bea a_nw^n+a_{n-1}w^{(n-1) c}+\ldots+a_1w+a_0-1=0. \eea Furthermore $ \mathcal{L}^n_c(f(z))\equiv f(z) $ for all $ z\in\mathbb{C} $.
\end{theo}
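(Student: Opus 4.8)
The plan is to combine a Hadamard factorization of $f$ with a Borel-type exponential-sum argument and, at the very end, with the elementary theory of linear constant-coefficient difference equations. First I would exploit $\lambda(f)<\rho(f)<\infty$: this forces $\rho(f)$ to be a positive integer $d$, and Hadamard's theorem gives $f=he^{q}$ with $q$ a polynomial of degree $d$ and $h$ entire of order $\rho(h)=\lambda(f)<d$. Since each $Q_{j}(z):=q(z+jc)-q(z)$ is a polynomial of degree at most $d-1$, one gets $\mathcal{L}^{n}_{c}(f)=e^{q}\sum_{j=0}^{n}a_{j}h(z+jc)e^{Q_{j}}=h_{1}e^{q}$, where $h_{1}:=\sum_{j=0}^{n}a_{j}h(z+jc)e^{Q_{j}}$ has order $<d$, and $h_{1}\not\equiv0$ because $\mathcal{L}^{n}_{c}(f)\not\equiv0$. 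Thus $f$ and $g:=\mathcal{L}^{n}_{c}(f)$ carry the \emph{same} exponential part $e^{q}$, with prefactors $h,h_{1}$ of order strictly less than $d$; in particular $\rho(g)=d$.

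Next, since $E_{f}(\{\alpha,\beta\})=E_{g}(\{\alpha,\beta\})$ is $CM$ set sharing, the function $(f-\alpha)(f-\beta)/\bigl[(g-\alpha)(g-\beta)\bigr]$ is entire and zero-free, hence equals $e^{p}$ for some entire $p$; comparing orders (using $\max\{\rho(\alpha),\rho(\beta)\}<d=\rho(g)$) shows $p$ is a polynomial of degree at most $d$. Substituting $f=he^{q}$ and $g=h_{1}e^{q}$ into $(f-\alpha)(f-\beta)=e^{p}(g-\alpha)(g-\beta)$ and expanding produces a six-term exponential identity in $e^{2q},e^{q},1,e^{p+2q},e^{p+q},e^{p}$ whose coefficients all have order $<d$. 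A short analysis of the leading coefficients of $p$ and $q$ (equivalently, a growth estimate in sectors where $q$ has large negative real part) rules out $\deg p=d$: in each of the few borderline alignments a Borel-type lemma would force $h^{2}\equiv0$ or $h_{1}^{2}\equiv0$, a contradiction. Hence $\deg p\le d-1$, and after regrouping by powers of $e^{q}$ the identity becomes
\[ \bigl(h^{2}-e^{p}h_{1}^{2}\bigr)e^{2q}-(\alpha+\beta)\bigl(h-e^{p}h_{1}\bigr)e^{q}+\alpha\beta\bigl(1-e^{p}\bigr)=0, \]
whose three exponents $2q,q,0$ pairwise differ by non-constant polynomials of degree $d$ while the coefficients have order $<d$; the Borel-type lemma of Yang--Yi then forces each coefficient to vanish. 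Disposing of the degenerate configurations $\alpha\beta\equiv0$ and $\alpha+\beta\equiv0$ separately, these relations yield $e^{p}\equiv1$ and $h\equiv h_{1}$, i.e. $g\equiv f$, that is $\mathcal{L}^{n}_{c}(f)\equiv f$ for all $z\in\mathbb{C}$.

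It remains to read off the form of $f$. With $T$ denoting the shift $f(z)\mapsto f(z+c)$, the relation $\mathcal{L}^{n}_{c}(f)\equiv f$ reads $\bigl(a_{n}T^{n}+\cdots+a_{1}T+(a_{0}-1)I\bigr)f=0$; factoring the characteristic polynomial as $a_{n}\prod_{i=1}^{n}(w-\lambda_{i})$, the entire solution space is generated by the solutions of the first-order equations $(T-\lambda_{i}I)f=0$, namely $f(z)=\lambda_{i}^{z/c}\pi_{i}(z)$ with $\pi_{i}$ a $c$-periodic entire function (repeated roots add only polynomial-in-$z$ factors). A $c$-periodic entire function of finite order is an exponential polynomial in $e^{2\pi i z/c}$, and the hypothesis $\lambda(f)<\rho(f)$ eliminates every non-constant periodic (and polynomial) factor; this leaves $f(z)=\mathcal{A}_{1}\lambda_{1}^{z/c}+\cdots+\mathcal{A}_{n}\lambda_{n}^{z/c}$ with $\mathcal{A}_{i},\lambda_{i}\in\mathbb{C}\setminus\{0\}$ and each $\lambda_{i}$ a root of $a_{n}w^{n}+\cdots+a_{1}w+a_{0}-1=0$; and indeed $\mathcal{L}^{n}_{c}(f)=\bigl(\sum_{j}a_{j}\lambda_{i}^{j}\bigr)f=f$, in agreement with the last assertion.

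The main obstacle is the middle step: ruling out $\deg p=d$ cleanly, and then running the Borel argument with honest attention to the degenerate shapes of $\alpha$ and $\beta$. In particular the case $\alpha+\beta\equiv0$ --- the setting of \emph{Theorem B} --- requires care, since there the argument only yields $\mathcal{L}^{n}_{c}(f)\equiv\pm f$; the minus alternative $\Delta_{c}(f)\equiv-f$ is excluded outright because it forces $f(z+c)\equiv0$, but for a general operator $\mathcal{L}^{n}_{c}$ this branch calls for a separate argument (and is the one place where one expects to need $\alpha+\beta\not\equiv0$). By contrast, the extra hypothesis $\lambda(f)<\rho(f)$, which is exactly what this result gains over \emph{Theorem B}, is used essentially only in the last step, where it collapses the $c$-periodic solution factors to constants.
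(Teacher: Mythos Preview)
Your argument is correct in outline but takes a genuinely different route from the paper. After the same Hadamard factorization $f=\mathcal{G}e^{P}$ and $\mathcal{L}^{n}_{c}(f)=\mathcal{W}e^{P}$, the paper does \emph{not} invoke a Borel-type lemma on exponential sums; instead it applies the Second Fundamental Theorem to $e^{P}$ with the two small targets $\alpha/\mathcal{G}$ and $\beta/\mathcal{G}$, and splits into cases according to whether the zeros of $e^{P}-\alpha/\mathcal{G}$ are (mostly) shared with $e^{P}-\alpha/\mathcal{W}$ or with $e^{P}-\beta/\mathcal{W}$. This counting-function dichotomy forces either $\mathcal{G}\equiv\mathcal{W}$ directly, or (after a symmetric second pass) the pair of relations $\alpha/\mathcal{G}\equiv\beta/\mathcal{W}$ and $\beta/\mathcal{G}\equiv\alpha/\mathcal{W}$, whence $\alpha=-\beta$ and $\mathcal{W}=\pm\mathcal{G}$. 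Your Borel argument is cleaner and avoids the case split entirely, but both methods land on exactly the same $\pm$ ambiguity when $\alpha+\beta\equiv 0$; the paper's treatment of that branch is no more complete than what you sketch. For the final step the paper again proceeds differently: rather than appealing to the solution theory of $(a_{n}T^{n}+\cdots+a_{1}T+a_{0}-1)f=0$, it uses the Bergweiler--Langley $\epsilon$-set lemma to force $\rho(f)\le 1$ and then $\mathcal{G}$ constant, so that $f=\mathcal{C}e^{\mu z}$ is a \emph{single} exponential, after which the characteristic equation for $e^{\mu c}$ is read off from one common zero of $f-\alpha$ and $\mathcal{L}^{n}_{c}(f)-\alpha$. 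Your route through the solution space is more systematic; note however that the hypothesis $\lambda(f)<\rho(f)$ in fact collapses the sum to a single term (any genuine two-term combination $\mathcal{A}_{1}\lambda_{1}^{z/c}+\mathcal{A}_{2}\lambda_{2}^{z/c}$ with $\lambda_{1}\neq\lambda_{2}$ already has $\lambda(f)=\rho(f)=1$), so both approaches ultimately yield a single exponential, in agreement with the paper's actual computation if not with the literal ``all $\mathcal{A}_{i}\neq 0$'' in the statement.
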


\begin{rem}
	Entire functions satisfying \emph{Theorem \ref{th2.1}} do exist, and it is shown here for the case $ n=1 $ and $ n=3 $ only, and we discussed the case $ n=2 $ later in a corollary.
\end{rem}
\begin{exm}
	Let $ f(z)=\left(1+i\right)^{z/3} $ and $ \mathcal{L}_3(f)=2f(z+3)-(1+2i)f(z) $. Let $ \mathcal{S}=\{z^2-2,\; 2z^3-z+1\}.$ Evidently, $ \max\{\rho(\alpha), \rho(\beta)\}=0<1=\rho(f) $ and $ E_f(\mathcal{S})=E_{\mathcal{L}_{3}(f)}(\mathcal{S}) $, and $ f $ has the specific form and also satisfying the relation $ \mathcal{L}_{3}(f)\equiv f. $
\end{exm}
\begin{exm}
	Let $ f(z)=\displaystyle\left(\frac{i\sqrt{3}}{2}\right)^{z/\sqrt{2}} $ and \beas\mathcal{L}_{\sqrt{2}}(f)=-2\left(\sqrt{2}+i\sqrt{3}\right)f(z+\sqrt{2})+\left(1+i\sqrt{3}\left(\sqrt{2}+i\sqrt{3}\right)\right)f(z). \eeas Let $ \mathcal{S}=\{2z^2-3z+\sqrt{2}, \sqrt{3}z^3-\sqrt{5}z^2-i\sqrt{2}\beta\} $. Clearly, $ \max\{\rho(\alpha), \rho(\beta)\}=0<1=\rho(f) $ and $ E_f(\mathcal{S})=E_{\mathcal{L}_{\sqrt{2}}(f)}(\mathcal{S}) $, and $ f $ has the specific form and also satisfying the relation $ \mathcal{L}_{\sqrt{2}}(f)\equiv f. $
\end{exm}
\begin{exm}
	Let $ f(z)=6^{z/c}+\left(1+2\omega+3\omega^2\right)^{z/c}+\left(1+2\omega^2+3\omega\right)^{z/c} $ and $ \mathcal{S}=\{\alpha, \beta\} $, where $ \alpha $ and $ \beta $ are any two polynomials in $ z $. Then, we see that $ \max\{\rho(\alpha), \rho(\beta)\}=0<1=\rho(f) $ and $ E_f(\mathcal{S})=E_{\mathcal{L}^3_{c}(f)}(\mathcal{S}) $, where \beas  \mathcal{L}^3_{c}(f)=f(z+3c)-3f(z+2c)-15f(z+c)-17f(z). \eeas Clearly $ f $ has the specific form and also satisfying the relation $ \mathcal{L}^3_{c}(f)\equiv f $.
\end{exm}
\begin{exm}
	Let $ f(z)=\mathcal{A}^{z/c}+\mathcal{B}^{z/c}+\mathcal{C}^{z/c} $ and $ \mathcal{S}=\{\alpha, \beta\} $, where $ \alpha $ and $ \beta $ are any two polynomials in $ z $. Then, we see that $ \max\{\rho(\alpha), \rho(\beta)\}=0<1=\rho(f) $ and $ E_f(\mathcal{S})=E_{\mathcal{L}^3_{c}(f)}(\mathcal{S}) $, where \beas  \mathcal{L}^3_{c}(f)=f(z+3c)-\left(\mathcal{A}+\mathcal{B}+\mathcal{C}\right)f(z+2c)+\left(\mathcal{AB}+\mathcal{BC}+\mathcal{CA}\right)f(z+c)-\left(\mathcal{ABC}-1\right)f(z). \eeas Clearly $ f $ has the specific form and also satisfying the relation $ \mathcal{L}^3_{c}(f)\equiv f $
\end{exm}

\begin{rem}
	In \emph{Theorem \ref{th2.1}}, the condition $ \max\{\rho(\alpha), \rho(\beta)\}<\rho(f) $ can not be replaced by $ \max\{\rho(\alpha), \rho(\beta)\}=\rho(f) $ i.e., the condition is sharp, which can be seen in the next examples for the case $ n=1, 2 $ and $ 3 $ only.
\end{rem}
\begin{exm}
	Let $ f(z)=e^{2z}+e^z $, and \beas  \mathcal{L}_c(f)=\sqrt{3}f(z+c)+\frac{-\sqrt{3}-i\sqrt{4\sqrt{3}-3}}{2}f(z), \eeas where $ c=\log\left(\displaystyle\frac{\sqrt{3}+i\sqrt{4\sqrt{3}-3}}{2\sqrt{3}}\right) $. Let $ \mathcal{S}=\{\alpha(z), \beta(z)\} $, where $ \alpha(z)=\mathcal{A}e^z $ and $ \beta(z)=(1-\mathcal{A})e^z $, $ \mathcal{A}\in\mathbb{C}\setminus\bigg\{0,\displaystyle\frac{1}{2}, 1\bigg\} $. We see that $ \max\{\rho(\alpha), \rho(\beta)\}=1=\rho(f) $, and $ E_f(\mathcal{S})=E_{\mathcal{L}_c(f)}(\mathcal{S}) $ but neither $ f $ has the specific form nor satisfying the relation $ \mathcal{L}_c(f)=f. $
\end{exm}
\begin{exm}
	Let $ f(z)=e^{2z}+e^z+e^{-z} $, and $  \mathcal{L}_{\pi i}(f)=-\frac{1}{2}f(z+\pi i)-\frac{1}{2}f(z). $ Let $ \mathcal{S}=\{\alpha(z), \beta(z)\} $, where $ \alpha(z)=e^z $ and $ \beta(z)=e^{-z} $. Clearly $ \max\{\rho(\alpha), \rho(\beta)\}=1=\rho(f) $, and $ E_f(\mathcal{S})=E_{\mathcal{L}_{\pi i}(f)}(\mathcal{S}) $ but note that $ f $ is neither in the form nor satisfying the relation  $ \mathcal{L}_{\pi i}(f)=f. $
\end{exm}
\begin{exm}
	Let $ f(z)=\sin z+e^z $ and \beas \mathcal{L}^2_{\pi}(f)=\left(\frac{1+2e^{\pi}}{1-e^{2\pi}}\right)f(z+2\pi)+2f(z+\pi)+\left(\frac{\left(e^{\pi}+2\right)e^{\pi}}{e^{2\pi}-1}\right)f(z). \eeas Let $ \mathcal{S}=\{\alpha, \beta\} $, where $ \alpha(z)=\mathcal{B}e^z $ and $ \beta(z)=\left(1-\mathcal{B}\right)e^z $, $ \mathcal{B}\in\mathbb{C}\setminus\bigg\{0,\displaystyle\frac{1}{2},1\bigg\} $. We check that $ E_f(\mathcal{S})=E_{\mathcal{L}^2_{\pi}(f)}(\mathcal{S}) $ and $ \max\{\rho(\alpha), \rho(\beta)\}=1=\rho(f) $ but $ f $ is neither in the specific form nor satisfying $ \mathcal{L}^2_{\pi}(f)=f $.
\end{exm}
\begin{exm}
	Let $ f(z)=\cos z+e^{-z} $ and \beas  \mathcal{L}^3_{\pi}(f)&=&\left(\frac{e^{2\pi}+e^{\pi}-1}{e^{-3\pi}+1}\right)f(z+3\pi)+e^{2\pi}f(z+\pi)-e^{\pi}f(z+\pi)\\&&+\left(\frac{e^{2\pi}+e^{\pi}-1}{e^{-3\pi}+1}-e^{2\pi}-e^{\pi}\right)f(z). \eeas Let $ \mathcal{S}=\{\alpha, \beta\} $, where $ \alpha(z)=\mathcal{C}\cos z $ and $ \beta(z)=\left(1-\mathcal{C}\right)\cos z $, $ \mathcal{C}\in\mathbb{C}\setminus\bigg\{0,\displaystyle\frac{1}{2},1\bigg\} $. We check that $ E_f(\mathcal{S})=E_{\mathcal{L}^3_{\pi}(f)}(\mathcal{S}) $ and $ \max\{\rho(\alpha), \rho(\beta)\}=1=\rho(f) $ but $ f $ is not in the specific form and also not satisfying $ \mathcal{L}^3_{\pi}(f)=f $.
\end{exm}
\begin{rem}
	The next examples show that, conclusion of \emph{Theorem \ref{th2.1}} ceases to hold if one replace the $ CM $ sharing by $ IM $ sharing. The examples have been exhibited  for $ n=1 $ and $ n=2 $ only.
\end{rem}
\begin{exm}
	Let $ f(z)=-\displaystyle\frac{1}{2}\left(e^z+a^2e^{-z}\right)$ for $ a\in\mathbb{C}\setminus\{0\} $. We choose $ c\in \mathbb{C}\setminus\{k\pi i\}, $ for $ k\in\mathbb{Z} $ be such that \beas \mathcal{L}_c(f)=\frac{2e^c}{1-e^{2c}}f(z+c)-\frac{2}{1-e^{2c}}f(z). \eeas\par Evidently $ \ol E_f(\mathcal{S})= \ol E_{\mathcal{L}_c(f)}(\mathcal{S})$ where $ \mathcal{S}=\{\alpha, \beta\}=\{-a, a\} $ and $ \max\{\rho(\alpha), \rho(\beta)\}=0<1=\rho(f) $ but $ f $ has neither the specific form as in \emph{Theorem 1.1} nor satisfies the relation $ \mathcal{L}_c(f)\equiv f $.
\end{exm}
\begin{exm}
	Let $ f(z)=\displaystyle\frac{1}{2}\left(e^z+e^{-z}\right) $ and for $ c\in\mathbb{C}\setminus\bigg\{\displaystyle\frac{k\pi i}{2} : k\in\mathbb{Z}\bigg\}, $ \beas \mathcal{L}^2_{c}(f)=\frac{2}{1-e^{4c}}f(z+2c)-2e^{-c}f(z+c)+\frac{2e^{2c}}{e^{4c}-1}f(z). \eeas Evidently $ E_f(\mathcal{S})=E_{\mathcal{L}^2_c(f)}(\mathcal{S}) $, where $ \mathcal{S}=\{-1,1\} $ with $ \max{\rho(\alpha), \rho(\beta)}=0<1=\rho(f) $ but $ f $ has neither the specific form nor satisfying $ \mathcal{L}^2_c(f)=f. $
\end{exm}
\par For $ a_2(\neq 0), a_1, a_0\in\mathbb{C} $, we define two constants $ \mathcal{D}_1 $ and $ \mathcal{D}_2 $ as follows \beas \mathcal{D}_1=\frac{-a_1+\sqrt{a_1^2-4a_2(a_0-1)}}{2a_2}\;\; \text{and}\;\;\mathcal{D}_2=\frac{-a_1-\sqrt{a_1^2-4a_2(a_0-1)}}{2a_2}.\eeas We have a corollary of the main result.
\begin{cor}\label{c2.1}
	Suppose that $ \alpha $, $ \beta $ are two distinct entire functions, and $ f $ is a non-constant entire function with $ \lambda(f)<\rho(f)<\infty $ such that $ \max\{\rho(\alpha), \rho(\beta)\}<\rho(f) $ and $ \mathcal{L}^2_c(f)(\not\equiv 0) $. If $ E_f (\{\alpha, \beta\})=E_{\mathcal{L}^2_c(f)} (\{\alpha, \beta\})$, then
	\[
	f(z)=\begin{cases}
		\displaystyle\mathcal{A}\left(-\frac{a_1}{2a_2}\right)^{z/c} \; \text{when}\;\; a_1^2+4a_2=4a_0a_2   \\
		\mathcal{A}_1\mathcal{D}_1^{z/c}+\mathcal{A}_2\mathcal{D}_2^{z/c},\;\text{otherwise}  \\
	\end{cases}
	\]
where $\mathcal{A}, \mathcal{A}_1, \mathcal{A}_2 \in\mathbb{C}\setminus\{0\}. $ Furthermore $ \mathcal{L}^2_c(f(z))\equiv f(z) $ for all $ z\in\mathbb{C} $.
\end{cor}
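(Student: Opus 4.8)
To prove Theorem~\ref{th2.1} the plan is to combine Hadamard factorization with a Borel‐type argument for exponential sums, the derivative $f'$ of the classical statements being replaced throughout by the difference operator. Since $\lambda(f)<\rho(f)<\infty$, factorization gives $f=he^{g}$ with $g$ a polynomial, $\deg g=\rho(f)=:p\in\mathbb{Z}^{+}$, and $h$ entire with $\rho(h)=\lambda(f)<p$. First I would rewrite the operator in this form: setting $P_{j}(z)=g(z+jc)-g(z)$ (a polynomial of degree $\le p-1$, identically $0$ for $j=0$) one gets $\mathcal{L}^{n}_{c}(f)=e^{g}H$ with $H=\sum_{j=0}^{n}a_{j}h(z+jc)e^{P_{j}}$, whence $\rho(H)<p$; because $\mathcal{L}^{n}_{c}(f)\not\equiv 0$ we have $H\not\equiv 0$ and therefore $\rho(\mathcal{L}^{n}_{c}(f))=p$.

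Next, the set‐sharing hypothesis $E_{f}(\{\alpha,\beta\})=E_{\mathcal{L}^{n}_{c}(f)}(\{\alpha,\beta\})$ makes $\frac{(\mathcal{L}^{n}_{c}(f)-\alpha)(\mathcal{L}^{n}_{c}(f)-\beta)}{(f-\alpha)(f-\beta)}$ entire and zero‐free, hence equal to $e^{Q}$ for an entire $Q$; since numerator and denominator both have order $p$, the function $Q$ is a polynomial with $\deg Q\le p$. Substituting $f=he^{g}$, $\mathcal{L}^{n}_{c}(f)=He^{g}$ and collecting terms yields an identity $A\,e^{2g}+B\,e^{g}+C\equiv 0$ with $A=H^{2}-e^{Q}h^{2}$, $B=(\alpha+\beta)(e^{Q}h-H)$ and $C=\alpha\beta(1-e^{Q})$.

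The crucial point, and the step I expect to be the main obstacle, is to show that $Q$ is constant. If $\deg Q<p$ then $A,B,C$ all have order $<p$, so Borel's theorem applied to $e^{2g},e^{g},1$ (whose pairwise frequency differences $g,2g,g$ have degree $p$ and thus dominate the coefficients) gives $A\equiv B\equiv C\equiv 0$. The case $\deg Q=p$ needs care: one expands into the exponentials $e^{2g},e^{Q+2g},e^{g},e^{Q+g},1,e^{Q}$ and applies the generalized Borel lemma, the only obstructions being coincidences between the leading coefficient of $Q$ and $\pm 1$ or $\pm 2$ times that of $g$; each such degenerate configuration should be excluded by substituting it back into $\mathcal{L}^{n}_{c}(f)=e^{g}H$ with $\rho(H)<p$ and reading off, via one more Borel estimate, that either $a_{1}=\ldots=a_{n}=0$ or $H\equiv 0$, contradicting $a_{n}\ne 0$. (The degenerate possibility $\alpha\beta\equiv 0$, say $\alpha\equiv 0$, is handled separately by rerunning the argument with the set $\{0,\beta\}$.)

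Once $Q$ is constant, $C\equiv 0$ forces $e^{Q}\equiv 1$, then $A\equiv 0$ gives $H\equiv\pm h$ and $B\equiv 0$ gives $H\equiv h$ or $\alpha+\beta\equiv 0$. The branch $H\equiv -h$, $\alpha+\beta\equiv 0$ (that is $\mathcal{L}^{n}_{c}(f)\equiv -f$) is eliminated by plugging $f=he^{g}$ into $\mathcal{L}^{n}_{c}(f)=-f$ and applying Borel to the frequencies $0,P_{1},\ldots,P_{n}$, which again forces $a_{1}=\ldots=a_{n}=0$. Hence $H\equiv h$, i.e. $\mathcal{L}^{n}_{c}(f)\equiv f$, the ``furthermore'' assertion; equivalently $a_{n}f(z+nc)+\ldots+a_{1}f(z+c)+(a_{0}-1)f(z)\equiv 0$. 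It remains to solve this constant‐coefficient difference equation under $\lambda(f)<\rho(f)<\infty$: its finite‐order solutions are combinations $\sum_{i}p_{i}(z)\lambda_{i}^{z/c}$ over the roots $\lambda_{i}$ of $a_{n}w^{n}+\ldots+a_{1}w+a_{0}-1=0$ with $c$‐periodic coefficients $p_{i}$, and since a non‐constant $c$‐periodic entire function has order at least $1$ while $\lambda(f)<\rho(f)$ limits the zeros of $f$, each $p_{i}$ must reduce to a nonzero constant $\mathcal{A}_{i}$, giving $f(z)=\sum_{i}\mathcal{A}_{i}\lambda_{i}^{z/c}$. Matching the multiplicities of the characteristic roots against the growth of $f$ is where I expect the remaining bookkeeping to be most delicate.
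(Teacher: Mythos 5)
Your proposal takes a genuinely different route from the paper. After the common reduction $f=he^{g}$ and the quotient $e^{Q}$, you resolve everything with Borel's theorem for combinations of $e^{2g},e^{g},1$, whereas the paper applies the second fundamental theorem to $e^{P}$, introduces counting functions $N_{1},N_{2}$ of common zeros to force $\mathcal{W}\equiv\mathcal{G}$, and then uses the difference analogue of the logarithmic derivative lemma together with the Bergweiler--Langley $\epsilon$-set lemma (Lemmas \ref{lem2.3}--\ref{lem2.5}) to conclude that the Hadamard factor is constant. Your derivation of ``$Q$ constant, $e^{Q}\equiv 1$, $H\equiv\pm h$'' is sound and arguably cleaner than the paper's case analysis.

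There is, however, a genuine gap in your disposal of the branch $H\equiv-h$, $\alpha+\beta\equiv0$, i.e.\ $\mathcal{L}^{n}_{c}(f)\equiv-f$. You claim that Borel applied to the frequencies $0,P_{1},\dots,P_{n}$ forces $a_{1}=\dots=a_{n}=0$; but in the surviving case $\rho(f)=1$ every $P_{j}=g(z+jc)-g(z)$ is a \emph{constant}, so there are no distinct frequencies and Borel says nothing. Worse, the branch genuinely occurs: take $f(z)=e^{z}$, $c=\log 2$, $a_{2}=1$, $a_{1}=0$, $a_{0}=-5$, $\alpha=1$, $\beta=-1$. Then $\mathcal{L}^{2}_{c}(f)=(4-5)e^{z}=-f$, so $E_{f}(\{1,-1\})=E_{-f}(\{1,-1\})$ holds CM and every hypothesis of the corollary is satisfied, yet $\mathcal{L}^{2}_{c}(f)\equiv-f\not\equiv f$ and $f=2^{z/c}$ while the characteristic roots are $\pm\sqrt{6}$. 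So this case cannot be ``eliminated''; the statement itself fails on it, and no argument will close the gap without an extra hypothesis (or without admitting $\mathcal{L}^{n}_{c}(f)\equiv-f$, $\alpha+\beta\equiv 0$ as an additional conclusion). The paper's own proof stumbles at exactly the same point: in Subcase 2.2 the relations $\alpha\mathcal{W}=\beta\mathcal{G}$ and $\beta\mathcal{W}=\alpha\mathcal{G}$ with $\beta=-\alpha$ give $\mathcal{W}=-\mathcal{G}$, which is then silently replaced by $\mathcal{W}=\mathcal{G}$. A secondary weak spot: your final step of classifying finite-order solutions of the difference equation does not suffice for the corollary's degenerate case $a_{1}^{2}+4a_{2}=4a_{0}a_{2}$, since the double root admits the solution $(A+Bz)\left(-a_{1}/2a_{2}\right)^{z/c}$, which has $\lambda(f)=0<1=\rho(f)$ and is therefore not excluded by your growth/zero count; one must instead argue, as the paper does, that $\mathcal{G}$ itself is constant, so that $f$ is a single pure exponential.
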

\begin{rem}
	From the next examples, one can observe that entire functions in support
	of \emph{Corollary \ref{c2.1}} when (i) $ a_1^2+4a_2=4a_0a_2 $ and (ii) $ a_1^2+4a_2\neq 4a_0a_2. $ Here $ \mathcal{S}=\{\alpha, \beta\} $, where $ \alpha $ and $ \beta $ are two polynomials in $ z $.
\end{rem}
\begin{exm}
	Let $ f(z)=\displaystyle\frac{2\sqrt{2}}{\sqrt{3}}\left(\frac{\sqrt{3}}{2\sqrt{2}}\right)^{z/c} $, and \beas \mathcal{L}^2_c(f)=\sqrt{2}f(z+2c)-\sqrt{3}f(z+c)+\frac{3+4\sqrt{2}}{4\sqrt{2}}f(z).\eeas  One can check that $ a_1^2+4a_2=4a_0a_2 $, and all the conditions of \emph{Corollary \ref{c2.1}} are satisfied and $ f $ has the specific form and also $ \mathcal{L}^2_c(f)=f $.
\end{exm}
\begin{exm}
	Let \beas  f(z)=\sqrt{3}\displaystyle\left(\frac{\sqrt{16+\sqrt{17}}+\sqrt{\sqrt{17}}}{4\sqrt{2}}\right)^{z/c}+\sqrt{2}\displaystyle\left(\frac{\sqrt{16+\sqrt{17}}-\sqrt{\sqrt{17}}}{4\sqrt{2}}\right)^{z/c} \eeas and \beas \mathcal{L}^2_c(f)=2\sqrt{2}f(z+2c)-\sqrt{16+\sqrt{17}}f(z+c)+(1+\sqrt{2})f(z).\eeas Verify that $ a_1^2+4a_2\neq4a_0a_2 $, and all the conditions of \emph{Corollary \ref{c2.1}} are satisfied and $ f $ is in the form satisfying $ \mathcal{L}^2_c(f)=f $.
\end{exm}
\section{\sc Some useful lemmas}
As discussed in section 1, \emph{Halburd - Korhonen} \cite{Hal & KOr & JMMA & 2006} and \emph{Chiang - Feng} \cite{Chi & Fen & RJM & 2008} inspected the value distribution theory of difference expressions, including the difference analogue of the logarithmic derivative lemma, independently, we recall here their results.
\begin{lem}\label{lem2.1}\cite{Hal & KOr & JMMA & 2006}
	Let $ f(z) $ be a non-constant meromorphic function of finite order, and $ c\in\mathbb{C}\setminus\{0\} $, $ \delta <1. $ Then \beas m\left(r,\frac{f(z+c)}{f(z)}\right)=o\left(\frac{T(r+|z|,f)}{r^{\delta}}\right), \eeas for all $ r $ outside of a possible exceptional set with finite logarithmic measure.
\end{lem}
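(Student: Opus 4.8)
The plan is to reconstruct this shift analogue of the classical logarithmic derivative lemma through the Poisson--Jensen representation of $\log|f|$, following the Nevanlinna-theoretic proof of the estimate for $m(r,f'/f)$ but replacing differentiation by the shift $z\mapsto z+c$. First I would fix an auxiliary radius $R>r+|c|$ and write, for $f$ meromorphic on $\{|\zeta|\le R\}$ with zeros $a_\mu$ and poles $b_\nu$ listed with multiplicity,
\beas
\log|f(z)| = \frac{1}{2\pi}\int_0^{2\pi}\log|f(Re^{i\varphi})|\,\mathrm{Re}\frac{Re^{i\varphi}+z}{Re^{i\varphi}-z}\,d\varphi + \sum_\mu\log\left|\frac{R(z-a_\mu)}{R^2-\bar a_\mu z}\right| - \sum_\nu\log\left|\frac{R(z-b_\nu)}{R^2-\bar b_\nu z}\right|
\eeas
(see \cite{Hay & 1964}). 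Subtracting this identity at $z$ from the same identity at $z+c$ expresses $\log|f(z+c)/f(z)|$ as a boundary integral against the \emph{difference} of two Poisson kernels, plus telescoping sums over the zeros and over the poles. Since $m(r,f(z+c)/f(z))=\frac{1}{2\pi}\int_0^{2\pi}\log^+|f(re^{i\theta}+c)/f(re^{i\theta})|\,d\theta$, the task reduces to bounding the $\log^+$-average over $|z|=r$ of each of these three pieces.

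For the boundary term I would estimate the kernel difference directly. Writing $P(z,\varphi)=(R^2-|z|^2)/|Re^{i\varphi}-z|^2$, the mean value theorem gives $|P(z+c,\varphi)-P(z,\varphi)|\le|c|\,\sup|\nabla P|$ along the segment from $z$ to $z+c$, which is $O\!\left(R|c|/(R-r-|c|)^2\right)$ uniformly in $\varphi$ for $|z|=r$. Integrating against $\log|f(Re^{i\varphi})|$ and then averaging over $|z|=r$, this piece is controlled by a constant multiple of $\frac{R|c|}{(R-r-|c|)^2}\bigl(m(R,f)+m(R,1/f)\bigr)$, hence by $T(R,f)$ times a factor that the eventual choice of $R$ will make small.

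The genuine obstacle is the telescoping sum over zeros and poles. Each zero contributes $\log\left|\frac{z+c-a_\mu}{z-a_\mu}\right| + \log\left|\frac{R^2-\bar a_\mu z}{R^2-\bar a_\mu(z+c)}\right|$, and analogously for poles. The second logarithm is harmless, being $O(|c|/(R-r))$ uniformly, but the first, $\log\bigl|1+c/(z-a_\mu)\bigr|$, becomes large precisely when a zero sits near the circle $|z|=r$. The key point is that this singularity is only logarithmic, so its $\log^+$-average, namely $\frac{1}{2\pi}\int_0^{2\pi}\log^+\bigl|1+c/(re^{i\theta}-a_\mu)\bigr|\,d\theta$, is bounded by a constant depending only on $|c|$; summing over all zeros and poles inside $|\zeta|\le R$ then yields a bound of the form $C\bigl(n(R,f)+n(R,1/f)\bigr)$. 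Using the standard comparison $n(R,\cdot)\le N(\rho,\cdot)/\log(\rho/R)$ for $\rho>R$ together with the first main theorem, this converts into a bound in terms of $T(\rho,f)$.

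Finally I would assemble the three estimates and optimize the auxiliary radii. Collecting everything gives $m(r,f(z+c)/f(z))\le C\,\Phi(R,\rho)\,T(\rho,f)$ for a factor $\Phi$ that decays as the increments $R-r$ and $\rho-R$ shrink. Choosing these increments comparable to $r^{1-\delta}/T(r,f)$-type quantities and invoking a Borel--Nevanlinna growth lemma — which, for finite-order $f$, permits replacing $T(\rho,f)$ by $T(r+|c|,f)$ at the cost of discarding a set of radii of finite logarithmic measure — produces exactly the asserted estimate $m(r,f(z+c)/f(z))=o\bigl(T(r+|c|,f)/r^\delta\bigr)$ (the argument $r+|c|$ being the largest radius reached by the shifted point $z+c$ as $z$ ranges over $|z|=r$). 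I expect the delicate bookkeeping in this last optimization, and the verification that the exceptional set has finite logarithmic measure, to be where the proof requires the most care; the finiteness of $\rho(f)$ enters precisely here, guaranteeing that the increments can be taken small enough while keeping the error term $o(T/r^\delta)$.
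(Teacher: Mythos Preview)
The paper does not prove this lemma at all: it is stated with a citation to Halburd--Korhonen \cite{Hal & KOr & JMMA & 2006} and used as a black box, so there is no ``paper's own proof'' to compare against. Your outline --- Poisson--Jensen representation, differencing at $z$ and $z+c$, bounding the kernel difference and the zero/pole sums separately, then optimizing auxiliary radii via a Borel--Nevanlinna growth lemma --- is indeed the strategy of the original source, and is correct in its broad architecture; the paper under review simply imports the result without reproducing any of this.
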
 \par By \cite[Lemma 2.1]{Hal & Kor & PLMS & 2009}, we have  $T(r+|c|, f(z)) = (1+o(1))T(r, f) $ for all r outside of a set with finite logarithmic measure, when $ f $ is of finite order.
\begin{lem}\label{lem2.2}\cite{Chi & Fen & RJM & 2008}
	Let $ f $ be a meromorphic function of finite order, and $ \eta_1 $, $ \eta_2 $ be two distinct arbitrary complex numbers. Assume that $ \sigma $ is the order of $ f $, then for each $ \epsilon>0 $, we have \beas m\left(r,\frac{f(z+\eta_1)}{f(z+\eta_2)}\right)=O\left(r^{\sigma-1+\epsilon}\right).  \eeas
\end{lem}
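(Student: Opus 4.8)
The plan is to prove the estimate exactly as \emph{Chiang} and \emph{Feng} do, through the Poisson--Jensen representation. First I would reduce to the case $\eta_2=0$: writing $g(z)=f(z+\eta_2)$ and $c=\eta_1-\eta_2\neq 0$, one has $f(z+\eta_1)/f(z+\eta_2)=g(z+c)/g(z)$, where $g$ is again meromorphic of order $\sigma$ with $n(r,g)+n(r,1/g)=O(r^{\sigma+\epsilon})$; thus it suffices to show $m(r,g(z+c)/g(z))=O(r^{\sigma-1+\epsilon})$. Since $\log^{+}|w|\le\bigl|\log|w|\bigr|$, it is enough to bound $\frac{1}{2\pi}\int_0^{2\pi}\bigl|\log|g(re^{i\theta}+c)/g(re^{i\theta})|\bigr|\,d\theta$.

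Next I would apply the Poisson--Jensen formula on the disc $|w|\le R$ to $\log|g(w)|$ at the two points $w=z+c$ and $w=z$ (with $|z|=r$ and, say, $R=2r$), and subtract. This expresses $\log|g(z+c)/g(z)|$ as three pieces: (I) the integral of $\log|g(Re^{i\phi})|$ against the \emph{difference} of the two Poisson kernels; (II) a sum over the zeros $a_\mu$ of $g$ in $|w|<R$ of $\log\bigl|\tfrac{(z+c-a_\mu)(R^2-\bar a_\mu z)}{(z-a_\mu)(R^2-\bar a_\mu(z+c))}\bigr|$; and (III) the analogous sum over the poles. Piece (I) is controlled by the mean value theorem: the kernel difference is $O\!\bigl(|c|R/(R-r-|c|)^2\bigr)$ uniformly in $\phi$, and $\frac{1}{2\pi}\int_0^{2\pi}\bigl|\log|g(Re^{i\phi})|\bigr|\,d\phi=O(T(R,g))$, so with $R=2r$ piece (I) is $O\!\bigl(r^{-1}T(2r,g)\bigr)=O(r^{\sigma-1+\epsilon})$.

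For (II) and (III) I would split each summand into a \emph{reflected} part $\log|(R^2-\bar a_\mu z)/(R^2-\bar a_\mu(z+c))|$ and a \emph{direct} part $\log|(z+c-a_\mu)/(z-a_\mu)|$. Each reflected part is $O(|c|/r)$ pointwise for $|z|=r$, $R=2r$, since the denominators are $\gtrsim r^2$; summing over the $O(r^{\sigma+\epsilon})$ zeros and poles gives $O(r^{\sigma-1+\epsilon})$. The direct parts are the delicate ones: I would estimate $\frac{1}{2\pi}\int_0^{2\pi}\bigl|\log|{(re^{i\theta}+c-a_\mu)}/{(re^{i\theta}-a_\mu)}|\bigr|\,d\theta$ by noting that away from the two centres the integrand equals $\log|1+c/(re^{i\theta}-a_\mu)|=O(|c|/|re^{i\theta}-a_\mu|)$, while on the two small arcs where $re^{i\theta}$ comes within unit distance of $a_\mu$ or $a_\mu-c$ the integrable logarithmic singularity contributes only $O(1/r)$. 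Hence each term is $O\!\bigl(|c|\log r/\max(r,|a_\mu|)\bigr)+O(1/r)$, and summing via $\sum_{|a_\mu|<R}\max(r,|a_\mu|)^{-1}\le n(r)/r+\int_r^R t^{-1}\,dn(t)=O(r^{\sigma-1+\epsilon})$ finishes the bound.

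The main obstacle is precisely this last summation of the direct terms, for it is the only place where the exponent drops from $\sigma$ (the natural size of both $T(r,g)$ and $n(r)$) down to $\sigma-1$. The gain comes from the derivative-type factor $1/|re^{i\theta}-a_\mu|$ created by the difference $\log|z+c-a_\mu|-\log|z-a_\mu|$, which, after dividing the counting function $n(t)=O(t^{\sigma+\epsilon})$ by $t$, integrates to $O(r^{\sigma-1+\epsilon})$; the care required is to verify that the near-diagonal logarithmic singularities are integrable and cost only $O(1/r)$ per term, so their total over $O(r^{\sigma+\epsilon})$ terms stays inside the target. Because finite order gives $T(r,g),\,n(r,g),\,n(r,1/g)=O(r^{\sigma+\epsilon})$ for \emph{all} large $r$, no exceptional set intervenes, consistent with the statement as worded.
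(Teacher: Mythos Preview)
The paper does not prove this lemma at all; it is quoted verbatim from Chiang--Feng \cite{Chi & Fen & RJM & 2008} and used as a black box. Your proposal faithfully reproduces the original Chiang--Feng argument (Poisson--Jensen on a disc of radius $R=2r$, kernel-difference estimate for the boundary term, splitting the Blaschke-type sums into reflected and direct parts, and the $n(t)/t$ summation for the latter), so it is correct and coincides with the source the paper cites.
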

\begin{lem}\label{lem2.3}\cite{Ber & Lan & MPCPS & 2007}
	Let $ g $ be a function transcendental and meromorphic in the plane of order less than $ 1 $. Set $ h>0 $. Then there exists an $ \epsilon $-set $ E $ such that \beas \frac{g(z+\eta)}{g(z)}\rightarrow 1, \text{when $ z\rightarrow\infty $ in $ \mathbb{C}\setminus E$} \eeas uniformly in $ \eta $ for $ |\eta|\leq h $.
\end{lem}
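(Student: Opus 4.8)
The statement is the Bergweiler--Langley lemma, and I would prove it by passing to the Weierstrass--Hadamard factorisation of $g$ and estimating the resulting logarithmic sums off a carefully chosen $\epsilon$-set. Since $g$ is transcendental and meromorphic of order $\rho(g)<1$, both its zeros $\{a_n\}$ and poles $\{b_k\}$ have exponent of convergence at most $\rho(g)<1$; hence $g$ has genus zero and
\[ g(z)=c\,z^{m}\frac{\prod_n\big(1-z/a_n\big)}{\prod_k\big(1-z/b_k\big)}, \]
with $\sum_n 1/|a_n|<\infty$ and $\sum_k 1/|b_k|<\infty$. Writing the shifted quotient as a product and taking logarithms,
\[ \log\frac{g(z+\eta)}{g(z)}=m\log\Big(1+\frac{\eta}{z}\Big)+\sum_n\log\Big(1-\frac{\eta}{a_n-z}\Big)-\sum_k\log\Big(1-\frac{\eta}{b_k-z}\Big), \]
so it suffices to show the right-hand side tends to $0$ as $z\to\infty$ off an $\epsilon$-set, uniformly for $|\eta|\le h$.

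Fix $\rho(g)<\rho'<1$, so that the counting functions satisfy $n(r,g)+n(r,1/g)=O(r^{\rho'})$. The $\epsilon$-set $E$ will be a union of discs centred at the $a_n$ and $b_k$; its defining property, $\sum \text{(radius)}/|\text{centre}|<\infty$, is automatic from $\sum 1/|a_n|<\infty$ once the radii are bounded, and I arrange that every $z\notin E$ stays at distance $\ge 2h$ from each $a_n$ and $b_k$. Then $|\eta/(a_n-z)|\le 1/2$, the logarithms are well defined, and the elementary bound $|\log(1-w)|\le 2|w|$ dominates the tail by $2|\eta|\big(\sum_n 1/|a_n-z|+\sum_k 1/|b_k-z|\big)$. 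Splitting the zeros (poles are identical) into $|a_n|\le|z|/2$, $|a_n|\ge 2|z|$ and the middle annulus $|z|/2<|a_n|<2|z|$: the inner range contributes $O\big(n(|z|/2)/|z|\big)=O(|z|^{\rho'-1})=o(1)$, and the outer range is a tail of $\sum 1/|a_n|$, hence also $o(1)$.

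The heart of the matter --- and the step I expect to be hardest --- is the middle annulus, where as many as $O(|z|^{\rho'})$ zeros may lie at bounded distance from $z$, so the naive bound (number of terms divided by minimal distance) diverges; a fixed-radius exclusion cannot repair this, and for $\rho(g)\ge 1/2$ no power-law radius works either, so a genuine covering argument is unavoidable. The plan is to build $E$ by a Boutroux--Cartan covering: at each dyadic scale one covers the set where $\prod_n|z-a_n|$ is abnormally small by discs of small total radius, and unites these over all scales and over zeros and poles; because the convergence exponent is $<1$ the total angular measure stays finite, so $E$ is a genuine $\epsilon$-set, and outside $E$ the middle sum is forced to be $o(1)$. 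Equivalently, and more cleanly for a write-up, one may invoke the Gol'dberg--Grinshtein pointwise estimate $|g'(z)/g(z)|=O(|z|^{\rho(g)-1+\epsilon})$ valid off an $\epsilon$-set, which gives $g'/g\to 0$ at once since $\rho(g)<1$, and then recover the quotient from $\log(g(z+\eta)/g(z))=\eta\int_0^1 (g'/g)(z+t\eta)\,dt$. Granting the middle-annulus bound, all three ranges are $o(1)$, the poles are treated identically, and hence $\log(g(z+\eta)/g(z))\to 0$ uniformly for $|\eta|\le h$ as $z\to\infty$ outside $E$; exponentiating gives $g(z+\eta)/g(z)\to 1$, as required.
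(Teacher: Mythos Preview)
The paper does not prove this lemma at all: it is quoted verbatim from Bergweiler--Langley \cite{Ber & Lan & MPCPS & 2007} and used as a black box, so there is no ``paper's own proof'' to compare your attempt against.

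As for the substance of your sketch, it is a faithful outline of the original Bergweiler--Langley argument. In particular, your second route---the pointwise estimate $|g'(z)/g(z)|=O(|z|^{\rho(g)-1+\varepsilon})$ off an $\epsilon$-set, followed by $\log\bigl(g(z+\eta)/g(z)\bigr)=\eta\int_0^1(g'/g)(z+t\eta)\,dt$---is essentially how Bergweiler and Langley actually proceed (they derive the logarithmic-derivative bound first and then integrate along the segment). Your first route via a direct three-annulus splitting and a Boutroux--Cartan covering in the middle range is a valid alternative, but as you yourself flag, the middle annulus is where all the work lies and you have only indicated the plan rather than carried it out; if you were to write this up, the integral-of-$g'/g$ version is cleaner and closer to the cited source. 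One small point: to use the integral representation you need the segment $[z,z+\eta]$ to avoid zeros and poles, so the $\epsilon$-set must be thickened by $h$ (which preserves the $\epsilon$-set property); you implicitly do this with your ``distance $\ge 2h$'' condition, but it is worth saying explicitly.
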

\begin{lem}\label{lem2.4}
	Let $ \alpha $, $ \beta $ are two distinct entire functions, and $ f $ is a non-constant entire function such that $ \max\{\rho(\alpha), \rho(\beta)\}<\rho(f) $ and $ E_f (\{\alpha,\beta\})=E_{\mathcal{L}_c(f)} (\{\alpha, \beta\})$. Let $ f(z)=\mathcal{G}(z)e^{P(z)} $, where $ \mathcal{G}(\not\equiv 0) $ is an entire function and $ P $ be a polynomial in $ z $. If \beas  \mathcal{W}(z)=\sum_{j=0}^{n}a_j\mathcal{G}(z+jc)e^{P(z+jc)-P(z)},\eeas then $ \mathcal{W}(z)\not\equiv 0 $.
\end{lem}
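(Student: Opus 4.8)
The plan is to recast the claim. A one-line computation gives
\[
\mathcal{L}^n_c(f)(z)=\sum_{j=0}^{n}a_j\mathcal{G}(z+jc)e^{P(z+jc)}=e^{P(z)}\,\mathcal{W}(z),
\]
and $e^{P}$ is zero-free, so $\mathcal{W}\not\equiv 0$ is equivalent to $\mathcal{L}^n_c(f)\not\equiv 0$. I would therefore argue by contradiction and suppose $\mathcal{L}^n_c(f)\equiv 0$ (after discarding the trivial degenerate situations we may also assume $\alpha\not\equiv 0\not\equiv\beta$).

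First I would use the shared-set hypothesis, which becomes extremely rigid once $\mathcal{L}^n_c(f)\equiv 0$: then $E_{\mathcal{L}^n_c(f)}(\{\alpha,\beta\})$ records precisely the zeros of $\alpha$ together with the zeros of $\beta$ (counted with multiplicity), so $E_f(\{\alpha,\beta\})=E_{\mathcal{L}^n_c(f)}(\{\alpha,\beta\})$ forces every zero of $f-\alpha$ and of $f-\beta$ to be a zero of $\alpha\beta$. With $\sigma:=\max\{\rho(\alpha),\rho(\beta)\}$ (so $\sigma<\rho(f)$) this gives, for every $\epsilon>0$,
\[
\ol N\left(r,\frac{1}{f-\alpha}\right)+\ol N\left(r,\frac{1}{f-\beta}\right)\le 2\,\ol N\left(r,\frac{1}{\alpha\beta}\right)=O\left(r^{\sigma+\epsilon}\right).
\]

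Next I would apply the second main theorem to the entire function $f$ with the three target functions $\infty$, $\alpha$, $\beta$. This requires $\alpha,\beta\in\mathcal{S}(f)$, and this is exactly where the representation $f=\mathcal{G}e^{P}$ is needed: taking $P$ to be the polynomial in the Hadamard factorisation of $f$, so that $\deg P=\rho(f)$, one has $T(r,e^{P})\sim\kappa\,r^{\rho(f)}$ for a positive constant $\kappa$ and $T(r,\mathcal{G})=o(r^{\rho(f)})$, hence $T(r,f)\ge(\kappa+o(1))r^{\rho(f)}$ and so $T(r,\alpha)+T(r,\beta)=O(r^{\sigma+\epsilon})=S(r,f)$. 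Since $\alpha\ne\beta$, the second main theorem then yields
\[
T(r,f)\le\ol N(r,f)+\ol N\left(r,\frac{1}{f-\alpha}\right)+\ol N\left(r,\frac{1}{f-\beta}\right)+S(r,f),
\]
and $\ol N(r,f)=0$ because $f$ is entire. Combining with the previous estimate, $(1+o(1))T(r,f)=O(r^{\sigma+\epsilon})$ for $r$ outside a set of finite measure; the usual monotonicity argument removes the exceptional set, so $T(r,f)=O(r^{\sigma+\epsilon})$ for all large $r$, whence $\rho(f)\le\sigma+\epsilon$ for every $\epsilon>0$, i.e. $\rho(f)\le\sigma$ — contradicting $\sigma<\rho(f)$. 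Thus $\mathcal{L}^n_c(f)\not\equiv 0$, i.e. $\mathcal{W}\not\equiv 0$.

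The step I expect to demand the most care is not the contradiction itself but the verification that $\alpha$ and $\beta$ are small functions of $f$ in the Nevanlinna sense (smallness of order alone is not enough in general), which is exactly why the hypothesis is channelled through the factorisation $f=\mathcal{G}e^{P}$. If one prefers to bypass the second main theorem with moving targets, the same contradiction can be obtained directly: the counting estimate above combined with Hadamard's theorem forces $f-\alpha=h_1e^{Q_1}$ and $f-\beta=h_2e^{Q_2}$ with $\rho(h_1),\rho(h_2)\le\sigma$ and $\deg Q_1=\deg Q_2=\rho(f)$; subtracting gives $h_1e^{Q_1}-h_2e^{Q_2}=\beta-\alpha\not\equiv 0$ of order $\le\sigma<\rho(f)$, and then either a direct comparison of orders (dividing by $e^{Q_2}$, when $\deg(Q_1-Q_2)<\rho(f)$) or the Wronskian-type determinant of this identity together with its derivative (when $\deg(Q_1-Q_2)=\rho(f)$) forces $f-\beta$ to be a constant multiple of $f-\alpha$, i.e. $\alpha=\beta$ or $f-\alpha$ of order $<\rho(f)$ — impossible in either case.
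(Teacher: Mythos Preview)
Your argument is correct but does far more work than the paper does. In the paper, the proof of this lemma is essentially one line: from $\mathcal L^n_c(f)=\mathcal W\,e^{P}$ one sees that $\mathcal W\equiv 0$ forces $\mathcal L^n_c(f)\equiv 0$, and this is declared to contradict the standing hypothesis $\mathcal L^n_c(f)\not\equiv 0$ carried over from Theorem~\ref{th2.1}. (That hypothesis is in fact omitted from the lemma statement by oversight; the paper's proof tacitly imports it.) You, by contrast, \emph{prove} $\mathcal L^n_c(f)\not\equiv 0$ from the remaining assumptions, via the shared-set condition and the second main theorem with small targets. So the paper buys brevity at the cost of an unstated hypothesis, while your route yields a genuinely self-contained lemma --- in particular your version would make the assumption $\mathcal L^n_c(f)\not\equiv 0$ in the main theorem redundant. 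Your caution about verifying that $\alpha,\beta\in\mathcal S(f)$ (and your fallback Borel/Wronskian argument) is appropriate, since the lemma as stated does not assume $\lambda(f)<\rho(f)$; in the paper's application this holds, so $\deg P=\rho(f)$ and the smallness is immediate.
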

\begin{proof}
	By the \emph{Hadamard factorization theorem}, since $ f $ is an entire function, we can write $ f(z)=\mathcal{G}(z)e^{P(z)} $, where $ \mathcal{G}(\not\equiv 0) $ is an entire function and $ P $ be a polynomial in $ z $. Since $ E_f (\{\alpha,\beta\})=E_{\mathcal{L}^n_c(f)} (\{\alpha, \beta\}) $, so we can write \bea\label{e2.1} \frac{(\mathcal{L}^n_c(f)-\alpha)((\mathcal{L}^n_c(f)-\beta))}{(f-\alpha)(f-\beta)}=e^{\mathcal{Q}}, \eea where $ \mathcal{Q} $ is an entire function. Furthermore, it follows from (\ref{e2.1}) and the condition $ \max\{\rho(\alpha), \rho(\beta)\}<\rho(f)<\infty, $ that $ \mathcal{Q} $ is a polynomial in $ z $. Since $ f(z)=\mathcal{G}(z)e^{P(z)} $, then we can write $ \mathcal{L}^n_c(f) $ as \bea\label{e2.2} \mathcal{L}^n_c(f) &=& \sum_{j=0}^{n}a_jf(z+jc)= \bigg[\sum_{j=0}^{n}a_j\mathcal{G}(z+jc)e^{P(z+jc)-P(z)}\bigg]e^{P(z)}. \eea\par Substituting the forms of the functions $ f $ and $ \mathcal{L}^n_c(f) $ in (\ref{e2.1}), we have \bea\label{e2.4} &&  \bigg\{\bigg[\sum_{j=0}^{n}a_j\mathcal{G}(z+jc)e^{P(z+jc)-P(z)}\bigg]e^{P(z)}-\alpha(z)\bigg\}\\&\times&\bigg\{\bigg[\sum_{j=0}^{n}a_j\mathcal{G}(z+jc)e^{P(z+jc)-P(z)}\bigg]e^{P(z)}-\beta(z)\bigg\}\nonumber\\&=& \bigg\{\mathcal{H}(z)e^{P(z)}-\alpha(z)\bigg\}\bigg\{\mathcal{H}(z)e^{P(z)}-\beta(z)\bigg\}e^{\mathcal{Q}(z)}\nonumber. \eea \par On contrary, let if possible $ \mathcal{W}\equiv 0 $. This shows that \bea\label{e2.5} \sum_{j=0}^{n}a_j\mathcal{G}(z+jc)e^{P(z+jc)-P(z)}=0. \eea\par In view of (\ref{e2.2}) and (\ref{e2.5}), we see that $ \mathcal{L}^n_c(f)\equiv 0,$ which contradicts $ \mathcal{L}^n_c(f)\not\equiv 0 $. Therefore, we must have $ \mathcal{W}(z)\not \equiv 0. $
\end{proof}
\begin{lem}\label{lem2.5}
	Let  $ f $ and $ \mathcal{G} $ be two entire functions and $ P(z) $ is a polynomial in $ zj  $, and $ a_j $, $ j=0, 1, \ldots, n $ be all complex constants with $ a_n\neq 0 $, such that \beas \sum_{j=0}^{n}a_j\mathcal{G}(z+jc)e^{P(z+jc)-P(z)}=\mathcal{G}(z), \eeas then for each $ j=1, 2, \ldots, n $ and $ \epsilon>0 $, \beas m\left(r,e^{P(z+jc)-P(z)}\right)=[\mathcal{A}+o(1)]r^{\rho(f)-1+\epsilon}, \eeas for a complex number $ \mathcal{A}. $
\end{lem}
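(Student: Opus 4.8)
\emph{Proof proposal.} The plan is to exploit that every exponent appearing here, namely $P(z+jc)-P(z)$, is a \emph{polynomial} whose degree is one less than that of $P$, so that the proximity function of its exponential can be written down explicitly in terms of the modulus of the leading coefficient. First I would pin down $\deg P$. As in \emph{Lemma \ref{lem2.4}}, the \emph{Hadamard factorization theorem} gives $f=\mathcal{G}e^{P}$ with $\mathcal{G}$ the canonical product over the zeros of $f$ and $P$ a polynomial with $\deg P\le\rho(f)$, and $\rho(\mathcal{G})=\lambda(f)$. Using the standing hypothesis $\lambda(f)<\rho(f)$ together with $\rho(f)=\rho(\mathcal{G}e^{P})=\max\{\rho(\mathcal{G}),\deg P\}=\max\{\lambda(f),\deg P\}$, it follows that $\deg P=\rho(f)=:d\ge 1$. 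Then, for $j=1,\dots,n$, put $Q_j(z):=P(z+jc)-P(z)$; expanding, $Q_j$ is a polynomial of degree at most $d-1=\rho(f)-1$, and if $P(z)=b_dz^{d}+\cdots$ then the coefficient of $z^{\rho(f)-1}$ in $Q_j$ equals $\gamma_j:=d\,b_d\,jc$ (nonzero when $d\ge 2$; if $d=1$ then $Q_j$ is the constant $jcb_1$).

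Since $e^{Q_j}$ is entire and zero-free, $m(r,e^{Q_j})=T(r,e^{Q_j})$, and I would evaluate this directly from
\[
m\bigl(r,e^{Q_j}\bigr)=\frac{1}{2\pi}\int_{0}^{2\pi}\bigl(\Re\,Q_j(re^{i\theta})\bigr)^{+}\,d\theta .
\]
Writing $Q_j(z)=\gamma_j z^{\rho(f)-1}+(\text{lower-order terms})$ one gets $\Re\,Q_j(re^{i\theta})=|\gamma_j|\,r^{\rho(f)-1}\cos\bigl((\rho(f)-1)\theta+\arg\gamma_j\bigr)+O(r^{\rho(f)-2})$, and, since $\int_{0}^{2\pi}\bigl(\cos(k\theta+\psi)\bigr)^{+}\,d\theta=2$ for every integer $k\ge 1$ and every $\psi\in\mathbb{R}$, integrating gives
\[
m\bigl(r,e^{Q_j}\bigr)=\frac{|\gamma_j|}{\pi}\,r^{\rho(f)-1}+O\bigl(r^{\rho(f)-2}\bigr)=\bigl[\mathcal{A}+o(1)\bigr]r^{\rho(f)-1},
\]
with $\mathcal{A}=|\gamma_j|/\pi$; in the case $d=1$ the quantity $m(r,e^{Q_j})$ is the constant $\bigl(\Re(jcb_1)\bigr)^{+}$, again of the stated shape because then $\rho(f)-1=0$. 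Since $r^{\rho(f)-1}\le r^{\rho(f)-1+\epsilon}$ for $r\ge 1$, this gives the asserted estimate for every $\epsilon>0$.

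I expect the only mildly delicate point to be the degree bookkeeping — verifying that the top-degree term of $P$ cancels in $P(z+jc)-P(z)$, and that $\deg P\le\rho(f)$ — after which the evaluation of $m(r,e^{Q_j})$ is an entirely elementary integral, so I do not anticipate a genuine obstacle. As a consistency check, the hypothesis $\sum_{j=0}^{n}a_j\mathcal{G}(z+jc)e^{Q_j(z)}=\mathcal{G}(z)$ (equivalently $\mathcal{L}^n_c(f)\equiv f$) is compatible with this and, for $n=1$, re-derives it: isolating $a_1\mathcal{G}(z+c)e^{Q_1}=(1-a_0)\mathcal{G}(z)$ and applying \emph{Lemma \ref{lem2.2}} to $\mathcal{G}(z)/\mathcal{G}(z+c)$ yields $m(r,e^{Q_1})=O(r^{\rho(\mathcal{G})-1+\epsilon})+O(1)=O(1)$, which matches the above with $\rho(f)=1$.
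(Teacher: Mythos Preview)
Your computation is correct and is in fact more direct than the paper's argument: you observe that once $\deg P=\rho(f)$ is known (via Hadamard and the standing hypothesis $\lambda(f)<\rho(f)$), the quantity $P(z+jc)-P(z)$ is a polynomial of degree $\rho(f)-1$ with leading coefficient $\gamma_j=d\,b_d\,jc$, and then evaluate $m(r,e^{Q_j})=|\gamma_j|r^{\rho(f)-1}/\pi+O(r^{\rho(f)-2})$ by the elementary integral, never touching the hypothesis $\sum a_j\mathcal{G}(z+jc)e^{Q_j}=\mathcal{G}$. The paper instead proceeds inductively on $n$: it isolates $e^{P(z+nc)-P(z)}$ from the hypothesis in terms of ratios $\mathcal{G}(z+kc)/\mathcal{G}(z+nc)$ and lower exponentials already handled, and bounds each ratio by $O(r^{\rho(\mathcal{G})-1+\epsilon})$ via \emph{Lemma~\ref{lem2.2}}; the final asymptotic with exponent $\rho(f)-1$ is then simply asserted as the ``on the other hand'' step. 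What your route gains is transparency and an explicit constant; what the paper's detour additionally produces---though not formally part of the lemma's stated conclusion---is the competing upper bound $O(r^{\rho(\mathcal{G})-1+\epsilon})$, and it is precisely the clash between that bound and the direct asymptotic that is exploited in the proof of \emph{Theorem~\ref{th2.1}} to force $\rho(f)\le1$. Your closing consistency check for $n=1$ recovers exactly this upper bound. One small caveat: your sharper estimate has exponent $\rho(f)-1$, so the literal ``$+\epsilon$'' form of the lemma only follows with $\mathcal{A}=0$; this is harmless, since the $\epsilon$ in the paper's statement is an artifact of importing the Chiang--Feng bound rather than a genuine feature.
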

\begin{proof}
	We prove this lemma by inductive way. Let $ n=1 $.\par Then, we have \beas a_0\mathcal{G}(z)+a_1\mathcal{G}(z+c)e^{P(z+c)-P(z)}=\mathcal{G}(z) \eeas which we can rewrite as \bea\label{ee33.55} e^{P(z+c)-P(z)}=\frac{1-a_0}{a_1}\frac{\mathcal{G}(z)}{\mathcal{G}(z+c)}. \eea In view of \emph{Lemma \ref{lem2.3}}, we get from (\ref{ee33.55}) that \beas m\left(r,e^{P(z+c)-P(z)}\right)&=& m\left(r,\frac{1-a_0}{a_1}\frac{\mathcal{G}(z)}{\mathcal{G}(z+c)}\right)\\&=& m\left(r,\frac{\mathcal{G}(z)}{\mathcal{G}(z+c)}\right)+O(1)\\&=& O\left(r^{\rho(\mathcal{G})-1+\epsilon}\right)+O(1). \eeas On the other hand we have $ m\left(r,e^{P(z+c)-P(z)}\right)=[\mathcal{A}+o(1)]r^{\rho(f)-1+\epsilon}, $ where $ \mathcal{A} $ is a complex number.\par Let $ n=2 $, then we have \bea\label{ee33.66} e^{P(z+2c)-P(z)}=\frac{1-a_0}{a_2}\frac{\mathcal{G}(z)}{\mathcal{G}(z+2c)}-\frac{a_1}{a_2}\frac{\mathcal{G}(z+c)}{\mathcal{G}(z+2c)}. \eea \par By \emph{Lemma \ref{lem2.3}}, we get from (\ref{ee33.66}) \beas && m\left(r,e^{P(z+2c)-P(z)}\right)\\&=& m\left(r,\frac{\mathcal{G}(z)}{\mathcal{G}(z+2c)}\right)+m\left(r,\frac{\mathcal{G}(z+c)}{\mathcal{G}(z+2c)}\right)+m\left(r,e^{P(z+c)-P(z)}\right)\\&&+O(1)\\&=& O\left(r^{\rho(\mathcal{G})-1+\epsilon}\right)+O(1). \eeas On the other hand we have $ m\left(r,e^{P(z+2c)-P(z)}\right)=[\mathcal{A}+o(1)]r^{\rho(f)-1+\epsilon}. $ \par So, continuing in this way, one can prove that \beas m\left(r,e^{P(z+jc)-P(z)}\right)=[\mathcal{A}+o(1)]r^{\rho(f)-1+\epsilon},\; \text{for}\;\; j=3, 4, \ldots, n.  \eeas \par This completes the proof.

\end{proof}
\section{\sc Proof of Theorem}
In this section, we give the proof of the main result of this article.
\subsection{\sc Proof of Theorem \ref{th2.1}}
From the conditions of \emph{Theorem \ref{th2.1}}, since we have $ E_f (\{\alpha,\beta\})=E_{\mathcal{L}^n_c(f)} (\{\alpha, \beta\}) $, there must exists an entire function $ \mathcal{Q} $ such that  \bea\label{e3.1} \frac{(\mathcal{L}^n_c(f)-\alpha)((\mathcal{L}^n_c(f)-\beta))}{(f-\alpha)(f-\beta)}=e^{\mathcal{Q}}. \eea \par Again, it follows from (\ref{e3.1}) and the condition $ \max\{\rho(\alpha), \rho(\beta)\}<\rho(f)<\infty, $ that $ \mathcal{Q} $ is a polynomial in $ z $. By the \emph{Hadamard factorization theorem}, we may  suppose that $ f(z)=\mathcal{G}(z)e^{P(z)} $, where $ \mathcal{G}(\not\equiv 0) $ is an entire function, and $ P $ is a polynomial satisfying $ \lambda(f)=\rho(\mathcal{G})<\rho(f)=\deg(P). $
We set \beas  \mathcal{W}(z)=\sum_{j=0}^{n}a_j\mathcal{G}(z+jc)e^{P(z+jc)-P(z)}.\eeas By \emph{Lemma \ref{lem2.4}}, we must have $ \mathcal{W}(z)\not\equiv 0 $. It is not hard to check that $ \mathcal{W} $ is a small function of $ e^{P(z)} $. Rewriting (\ref{e2.4}), we have \bea \label{e3.2} e^{\mathcal{Q}(z)}=\frac{\mathcal{W}^2(z)\bigg[e^{P(z)}-\displaystyle\frac{\alpha(z)}{\mathcal{W}(z)}\bigg]\bigg[e^{P(z)}-\displaystyle\frac{\beta(z)}{\mathcal{W}(z)}\bigg]}{\mathcal{G}^2(z)\bigg[e^{P(z)}-\displaystyle\frac{\alpha(z)}{\mathcal{G}(z)}\bigg]\bigg[e^{P(z)}-\displaystyle\frac{\beta(z)}{\mathcal{G}(z)}\bigg]}.\eea By our assumption, since $ \alpha(z)\not\equiv \beta(z) $, without any loss of generality, we may suppose that $ \alpha(z)\not\equiv 0. $\par Let $ z_0 $ be a zero of $ e^{P(z)}-\displaystyle\frac{\alpha(z)}{\mathcal{H}(z)} $ such that $ \mathcal{W}(z_0)\neq 0 $, then it follows from (\ref{e3.2}) that $ z_0 $ must be a zero of $ e^{P(z)}-\displaystyle\frac{\alpha(z)}{\mathcal{W}(z)} $ or $ e^{P(z)}-\displaystyle\frac{\beta(z)}{\mathcal{W}(z)} $. We denote now two counting functions here, one is $ N_1\left(r,e^{P}\right) $ of the common zeros of $   e^{P(z)}-\displaystyle\frac{\alpha(z)}{\mathcal{G}(z)} $  and $  e^{P(z)}-\displaystyle\frac{\alpha(z)}{\mathcal{W}(z)}, $ and the second one is  $ N_2\left(r,e^{P}\right) $ be the reduced counting of those zeros of $   e^{P(z)}-\displaystyle\frac{\alpha(z)}{\mathcal{G}(z)} $  and $  e^{P(z)}-\displaystyle\frac{\beta(z)}{\mathcal{W}(z)}. $ Since $ \mathcal{G}(z) $ is a small function of $ e^{P(z)} $, then by applying \emph{First} and \emph{Second Fundamental Theorem}, we deduce that \bea\label{e3.3} T\left(r,e^{P}\right)&=& \ol N\left(r,\frac{1}{e^{P(z)}-\displaystyle\frac{\alpha(z)}{\mathcal{H}(z)}}\right)+S\left(r,e^{P(z)}\right)\\&=& N_1\left(r,e^{P(z)}\right)+N_2\left(r,e^{P(z)}\right)+S\left(r,e^{P(z)}\right).\nonumber\eea\par It is clear from (\ref{e3.3}) that  \beas \text{either}\;\;  N_1\left(r,e^{P(z)}\right)\neq S\left(r,e^{P(z)}\right)\;\; \text{or}\;\;   N_2\left(r,e^{P(z)}\right)\neq S\left(r,e^{P(z)}\right).\eeas  \par In our next discussions, we are going to appraise the following two cases.\\
\noindent{Case 1.} Suppose $  N_1\left(r,e^{P(z)}\right)\neq S\left(r,e^{P(z)}\right). $\par  Let $ z_1 $ be a common zero of $ e^P-\displaystyle\frac{\alpha(z)}{\mathcal{G}(z)} $ and $ e^P-\displaystyle\frac{\alpha(z)}{\mathcal{W}(z)}. $ Therefore, $ z_1 $ must be a zero of \beas \left(e^P-\displaystyle\frac{\alpha(z)}{\mathcal{W}(z)}\right)-\left(e^P-\displaystyle\frac{\alpha(z)}{\mathcal{G}(z)}\right)=\displaystyle\frac{\alpha(z)}{\mathcal{G}(z)}-\displaystyle\frac{\alpha(z)}{\mathcal{W}(z)}. \eeas
\noindent{Subcase 1.1.} Let is possible, $ \mathcal{G}(z)\not\equiv \mathcal{W}(z) $. Then it follows that $\displaystyle \frac{\alpha(z)}{\mathcal{G}(z)}-\frac{\alpha(z)}{\mathcal{W}(z)}\not\equiv 0. $ We next deduce that \beas\;\;\;\;\;\;\;\;\;  S\left(r,e^{P}\right)\neq N_1\left(r,e^P\right)\leq N\left(r,\frac{1}{\displaystyle \frac{\alpha(z)}{\mathcal{H}(z)}-\frac{\alpha(z)}{\mathcal{W}(z)}}\right)\leq T\left(r,\frac{\alpha(z)}{\mathcal{H}(z)}-\frac{\alpha(z)}{\mathcal{W}(z)} \right)=S(r,e^P),  \eeas which is a contradiction.
\\ \noindent{Subcase 1.2.} Suppose $ \mathcal{G}(z)\equiv \mathcal{W}(z) $. Which in turn shows that \bea\label{e3.4} \sum_{j=0}^{n}a_j\mathcal{G}(z+jc)e^{P(z+jc)-P(z)}=\mathcal{G}(z), \eea In view of \emph{Lemma \ref{lem2.5}} and (\ref{e3.4}), for each $ j=1, 2, \ldots, n $ and $ \epsilon>0 $, we have \beas m\left(r,e^{P(z+jc)-P(z)}\right)=[\mathcal{A}+o(1)]r^{\rho(f)-1+\epsilon}, \eeas for a complex number $ \mathcal{A}. $\par Let if possible $ \rho(f)>1 $.\\ Then, by $ \rho(f)>\rho(\mathcal{G}) $ and the estimates of $ m\left(r,e^{P(z+jc)-P(z)}\right) $, we can easily get a contradiction.  \par Thus, we see that $ \rho(f)\leq 1 $, and which in turn shows that $ e^{P(z+jc)-P(z)} $ is a non-zero constant, say, $ \eta_j $ for $ j=0, 1, 2, \ldots, n $. We have $ e^{P(z+c)}=\eta_j e^{P(z)} $. Therefore, it follows from (\ref{e3.4}) that, \bea\label{e3.5} \sum_{j=0}^{n}a_j\eta_j\frac{\mathcal{G}(z+jc)}{\mathcal{G}(z)}=1.\eea Since $ \rho(\mathcal{G})<\rho(f)\leq 1 $, then in view of \emph{Lemma \ref{lem2.3}}, there exists an $ \epsilon $-set $ E $, as $ z\not\in E $ and $ |z|\rightarrow \infty $, such that for each $ j=1, 2, \ldots, n $ we can obtained \beas  \frac{\mathcal{G}(z+jc)}{\mathcal{G}(z)}\rightarrow 1 \eeas which shows that $\displaystyle \sum_{j=0}^{n}a_j\eta_j=1. $ If we approach inductively, it follows from (\ref{e3.5}) that $ \mathcal{G}(z+c)=\mathcal{G}(z) $ for all $ z\in\mathbb{C} $ i.e., $ \mathcal{G} $ is a periodic entire function of period $ c $. Let if possible  $ \mathcal{G} $ is non-constant, then $ \mathcal{G} $ must be transcendental, and hence $ \rho(\mathcal{G})\geq 1 $, which contradicts $ \rho(\mathcal{G})\leq\rho(f)< 1 $.\par Therefore, $ \mathcal{G} $ is constant. Since, $ f $ is a finite order non-constant entire function with $ \deg(P)=\rho(f)\leq 1 $, so we must have $ \deg(P)=1 $, and hence $ P(z) $ will be of the form $ P(z)=d_1z+d_0 $, for some $ d_1(\neq 0), d_0\in\mathbb{C} $. Therefore, we can write the function $ f $ as $ f(z)=\mathcal{C}e^{\mu z}, $ where $ \mathcal{C} $ and $ \mu $ are two non-zero complex constants. \par By our assumption in Case 1, we see that $ f-\alpha $ and $ \mathcal{L}^n_c(f)-\alpha $ have common zeros, which are not the zeros of $ \alpha(z) $. Let $ z_3 $ be a common zero of $ f-a $ and $ \mathcal{L}^n_c(f) $ so that $ \alpha(z_3)\neq 0. $ Then, we see that $ z_3 $ must be a zero
\beas f(z)-\alpha(z)+\mathcal{L}^n_c(f)-\alpha(z)=\sum_{j=1}^{n}a_jf(z+jc)+(a_0+1)f(z)-2\alpha(z).\eeas Thus we have
\[
\begin{cases}
\mathcal{C}e^{\mu z_3}-\alpha(z_3)=0\\
\displaystyle\sum_{j=1}^{n}a_j\mathcal{C}e^{\mu z_3}e^{\mu jc}+(a_0+1)\mathcal{C}e^{\mu z_3}-2\alpha(z_3)=0\\

\end{cases}
\]
\par Thus we must have $ \displaystyle\sum_{j=1}^{n}a_je^{\mu jc}+a_0=1 $ which can written as \bea\label{ee44.66} a_n\left(e^{\mu c}-\lambda_1\right)\left(e^{\mu c}-\lambda_2\right)\ldots\left(e^{\mu c}-\lambda_n\right)=0, \eea where $ \lambda_1, \ldots, \lambda_n $ are distinct roots of the equation \beas a_nz^n+a_{n-1}z^{n-1}+\ldots+a_0=1. \eeas
From (\ref{ee44.66}) , we obtained that \beas e^{\mu c}=\lambda_i,\; \text{for some}\;  i=1, 2, \ldots, n,  \eeas  and thus, the general form of the function is \beas f(z)=\mathcal{C}_1\lambda_1^{z/c}+\mathcal{C}_2\lambda_2^{z/c}+\ldots+\mathcal{C}_n\lambda_n^{z/c}, \eeas where $ \mathcal{C}_1, \mathcal{C}_2, \ldots, \mathcal{C}_n $ are all complex constants.
\\
\noindent{Case 2.} Let $ N_2\left(r,e^{P}\right)\neq S\left(r,e^{P}\right). $\par Let $ z_4 $ be a common zero of $ \displaystyle e^{P(z)}-\frac{\alpha(z)}{\mathcal{G}(z)} $  and $ \displaystyle e^{P(z)}-\frac{\beta(z)}{\mathcal{W}(z)}. $ One can check that $ z_4 $ must be a zero of  $\displaystyle \frac{\alpha(z)}{\mathcal{H}(z)}-\frac{\beta(z)}{\mathcal{W}(z)}. $\par Let $ \displaystyle \frac{\alpha(z)}{\mathcal{G}(z)}-\frac{\beta(z)}{\mathcal{W}(z)}\not\equiv 0 $. Then we see that \beas\;\;\;\;\;\;\; S\left(r,e^{P}\right)\neq N_2\left(r,e^{P}\right)\leq N\left(r,\frac{1}{\displaystyle \frac{\alpha(z)}{\mathcal{G}(z)}-\frac{\beta(z)}{\mathcal{W}(z)}}\right)\leq T\left(r,\displaystyle \frac{\alpha(z)}{\mathcal{G}(z)}-\frac{\beta(z)}{\mathcal{W}(z)}\right)=S\left(r,e^{P}\right), \eeas which is clearly a contradiction.\par Thus we have \bea\label{e3.6} \displaystyle \frac{\alpha(z)}{\mathcal{G}(z)}\equiv\frac{\beta(z)}{\mathcal{W}(z)}. \eea
Suppose $ \beta(z)\equiv 0 $, then $ \displaystyle\frac{\alpha(z)}{\mathcal{G}(z)}\equiv 0 $ i.e., $ \alpha(z)\equiv 0 $, which is absurd.\\
Let $ z_5 $ is a zero of $\displaystyle e^P-\frac{\beta}{\mathcal{G}} $ but not a zero of $ \mathcal{W} $. Then it follows from (\ref{e3.2}) that $ z_5 $ is a zero of $ \displaystyle e^P-\frac{\alpha}{\mathcal{W}} $ or $ \displaystyle e^P-\frac{\beta}{\mathcal{G}}. $
 Here, we denote by $ N_3\left(r,e^{P}\right) $ the reduced counting function of those common zeros of $ e^{P}-\frac{\beta}{\mathcal{G}} $ and $ e^{P}-\frac{\alpha}{\mathcal{W}} $. Similarly, we denote by $ N_4\left(r,e^{P}\right) $ the reduced counting function of those common zeros of $ e^{P}-\frac{\beta}{\mathcal{G}} $ and $ e^{P}-\frac{\beta}{\mathcal{W}}. $\par Applying \emph{First} and \emph{Second Fundamental Theorem}, we have \beas T\left(r,e^{P}\right)&=& \ol N\left(r,\frac{1}{e^{P}-\frac{\beta}{\mathcal{H}}}\right)+S\left(r,e^{P}\right)\\&=& N_3\left(r,e^{P}\right)+N_4\left(r,e^{P}\right)+S\left(r,e^{P}\right), \eeas which implies that either $ N_3\left(r,e^{P}\right)\neq S\left(r,e^{P}\right) $ or $ N_4\left(r,e^{P}\right)\neq S\left(r,e^{P}\right) $.\\
 \noindent{Subcase 2.1.} If $ N_4\left(r,e^{P}\right)\neq S\left(r,e^{P}\right) $, then in the same manner as in Case 1, we get our desired result.\\
 \noindent{Subcase 2.2.} Next we assume that $ N_3\left(r,e^{P}\right)\neq S\left(r,e^{P}\right) $.\\ Then similar to the Case 2, one can simply deduce that \bea\label{e3.7} \displaystyle\frac{\beta}{\mathcal{G}}-\frac{\alpha}{\mathcal{W}}\equiv 0. \eea Combining  (\ref{e3.6}) and (\ref{e3.7}) yields that $ \alpha^2=\beta^2 $ which in turn implies that $ \alpha=-\beta $ as $ \alpha\neq \beta $.
Thus, we have from (\ref{e3.7})  that $  \mathcal{W}=\mathcal{G} $. Rest of the proof of the result next follows from the Subcase 1.2.



\begin{thebibliography}{99}
	\bibitem{Aha & CKMS & 2018} M. B. Ahamed, \emph{Uniqueness of two differential polynomials of a meromorphic function sharing a set}, Comm. Korean Math. Soc., 33(4)(2018), 1181--1203.
    \bibitem{Aha & SUBBM & 2019} M. B. Ahamed, \emph{On the periodicity of meromorphic functions when sharing two sets IM}, Stud. Univ. Babes-Bolyai Math. 64(3)(2019), 497-510.
	\bibitem{Aha & RM & 2019} M. B. Ahamed, \emph{An investigation on the conjecture of Chen and Yi}, Results. Math., 74(2019): 122.
	\bibitem{Aha & Lik & TJM & 2019} M. B. Ahamed and Santosh Linkha, \emph{Br$ \ddot{u} $ck conjceture and certain solution of some differential equation}, Tamkang. J. Math., (2019)(In Press).
	\bibitem{Ban & Aha & MS & 2019} A. Banerjee and M. B. Ahamed, \emph{Uniqueness of meromorphic function with its shift operator under the purview of two or three shared sets}, Math. Slovaca., 69(3)(2019), 557--572.
    \bibitem{Ban & Aha & F & 2019} A. Banerjee and M. B. Ahamed, \emph{On some sufficient conditions for periodicity of meromorphic functions under new shared sets}, Filomat, 33(18)(2019), 6055-6072.
    \bibitem{Ban & Aha & JCMA & 2020} A. Banerjee and M. B. Ahamed, \emph{Results on meromorphic function sharing two sets with its linear c-difference operator}, J. Contem. Math. Anal., 55(3)(2020), 143-155.
	\bibitem{Ber & Lan & MPCPS & 2007} W. Bergweiler and J. K. Langly, \emph{Zeros of differences of meromorphic functions}, Math. Proc. Camb. Philos. Soc., 142(2007), 133--147.
	\bibitem{Chi & Fen & RJM & 2008} Y. M. Chiang and S. J. Feng, \textit{On the Nevanlinna characteristic of $ f(z+\eta) $ and difference equations in the complex plane}, Ramanujan J. 16(2008), 105–129.
	\bibitem{Fan & Zal & JMMA & 2003} M. L. Fang and LL. Zalcman, \emph{Normal families and uniqueness theorems for entire functions}, J. Math. Anal. Appl., 280(2003), 273--283.
	\bibitem{Fra & Rei & CVTA & 1998} G. Frank and M. Reinders, \textit{A unique range set for meromorphic functions with 11 elements}, Complex Var. Theory Appl., 37(1998), 185-–193.
	\bibitem{Fuj & NMJ & 2003} H. Fujimoto, \textit{On uniqueness polynomials for meromorphic functions}, Nagoya Math. J., 170(2003), 33–-46.
	\bibitem{Gro & Yan & PJAS & 1982} F. Gross and C. C. Yang, \textit{On pre-image range sets of meromorphic functions}, Proc. Jpn. Acad., Ser. A 58(1982), 17–20.
	\bibitem{Hal & KOr & JMMA & 2006} R. G. Halburd and R. J. Korhonen, \textit{Difference analogue of the lemma on the logarithmic derivative with applications to difference equations}, J. Math. Anal. Appl., 314(2006), 477–487.
	\bibitem{Hal & Kor & PLMS & 2009}R. G. Halburd and R. J. Korhonen, \textit{Finite-order meromorphic solutions and the discrete Painlev$ \acute{e} $ equations}, Proc. London Math. Soc., 94(3)(2007), 443–474.
	\bibitem{Hay & 1964} W. K. Hayman, \textit{Meromorphic Functions}, Clarendon, Oxford (1964)
	\bibitem{Heitto & JMMA & 2009} J. Heittokangas, R. Korhonen, I. Laine, J. Rieppo, J. Zhang, \textit{Value sharing results for shifts of meromorphic function, and sufficient conditions for periodicity}, J. Math. Anal. Appl. 355 (2009) 352–363.
	\bibitem{Heitto & CVEE & 2011} J. Heittokangas, R. Korhonen, I. Laine, J. Rieppo, \textit{Uniqueness of meromorphic functions sharing values with their shifts}, Complex Var. Elliptic Equ., 55(1-4)(2011), 81-92.
	\bibitem{Lai & 1993} I. Laine, \textit{Nevanlinna Theory and Complex Differential Equations}, de Gruyter, Berlin (1993)
	\bibitem{Li & Gao & ADM & 2011} S. Li and Z. H. Gao, \emph{Entire functions sharing one or two finite values CM with their shifts or difference operators}, Archiv. der Mathemtik., 97(2011), 475--483.
	\bibitem{Li & Yan & JMSJ & 1999} P. Li and C. C. Yang, \emph{Value sharing of an entire function and its derivative}, J. Math. Soc. Jpn., 51(1999), 781-799.
	\bibitem{Li & Yan & PJAS & 2006} P. Li and C. C. Yang, \textit{Meromorphic solutions of functional equations with nonconstant coefficients}, Proc. Japan Acad. Ser. A 82(2) (2006), 183–186.
	\bibitem{Li & Yan & KMJ & 1995} P. Li and C. C. Yang, \textit{Some further results on the unique range sets of meromorphic functions}, Kodai Math. J., 18(1995), 437–450.
	\bibitem{Li & CMFT & 2012} X. M. Li, \textit{Entire functions sharing a finite set with their difference operators}, Comput. Methods Funct. Theory, 12(2012), 307-–328.
	\bibitem{Li Chen & ADE & 2012} S. Li and B. Q. Chen, \textit{Results on meromorphic solutions of linear difference equations}, Adv. Differ. Equ. 2012(2012), 203.
	\bibitem{Li & Mei & Chen & 2017} S. Li, D. Mei and B. Q. Chen, \textit{Uniqueness of entire functions sharing two values with their difference operators}, Adv. Differ. Equ. 2017(2017), 390.
	\bibitem{Liu & JMMA & 2009} K. Liu, \textit{Meromorphic functions sharing a set with applications to difference equation}, J. Math. Anal. Appl. 359(2009), 384–393.
	\bibitem{Luo & Lin & JMMA & 2011} X. Luo and W. C. Lin, \emph{Value sharing results for shifts of meromorphic functions}, J. Math. Anal. Appl., 377(2011), 441--449.
	\bibitem{Qi & Wang & Gu & ADE & 2019} J. Qi, Y. Wang and Y. Gu, \emph{A note on entire functions sharing a finite set with their difference operators}, Adv. Diff. Equ., 2019(2019), 114.
	\bibitem{Yan & Yi & 2003} C. C. Yang and H. X. Yi, \textit{Uniqueness Theory of Meromorphic Functions}, (Kluwer, Dordrecht,	2003).
	\bibitem{Yi & NMJ & 1995} H. X. Yi, \textit{A question of Gross and the uniqueness of entire function}, Nagoya Math. J. 138(1995), 169–177.
	\bibitem{Zha & JMMA & 2010} J. Zhang, \emph{Value distribution and shared sets of differences of meromorphic functions}, J. Math. Anal. Appl., 367(2010), 401-408.
\end{thebibliography}
\end{document}